\theoremstyle{plain}
\newtheorem{theorem}{Theorem}[section]
\newtheorem{lemma}{Lemma}[section]
\newtheorem{corollary}{Corollary}[section]
\theoremstyle{definition}
\newtheorem{definition}{Definition}[section]
\begin{document}
	
	\title{The Dirichlet series of the arithmetic derivative}
	
	\author{
		{ \sc Es-said En-naoui } \\ 
		Faculty Of Science And Technics,University Sultan Moulay Slimane\\ Morocco\\
		essaidennaoui1@gmail.com\\
		\\
	}
	
	\maketitle

	\begin{abstract}
	The main object of this paper is to give the generalized von mangoldt function using the L-additive function which can help us to make it possible to calculate The Dirichlet series of the arithmetic derivative $\delta$ and Dirichlet series defined by:
	 $$
	\sum \limits_{n\geq 1} \frac{f(n)\delta(n)}{n^s}
	$$ where $f$ is a classical arithmetic function.
	\end{abstract}
	
	\section{Introduction}
	The arithmetic derivative of a natural number $n$, denoted by $\delta(n)$ or $D(n)$ or $n'$, has been the subject of extensive study, from Barbeau, E. J (see, e.g., \cite{Ba}) to P. Haukkanen, J. K. Merikoski, and T. Tossavainen (see, e.g., \cite{Ufna}) .\\
	
	In this work, we address two major result: Dirichlet product of L-additive function and generalized von Mangoldt function using L-additive function $f$.\\
In this article, we use the generalized function von Mangoldt using the L-additive function to find alternative proof for many series expansions depend on the arithmetic derivative function.
	The methods readily generalize, and can be applied to other L-additive functions. Our  principal result are that :
	$$
	\sum \limits_{n\geq 1} \frac{\delta(n)}{n^s}=\zeta(s-1)\sum \limits_{p}\frac{1}{p^{s}-p}
	$$	where $\delta$ is the arithmetic derivative function .
	
\medskip
\noindent
	 Let $n$ be a positive integer. Its {\it arithmetic derivative}
	is the
	function $\delta\;:\; \mathbb N \rightarrow \mathbb N$ ,$\;$defined by the rules : 
	
	\begin{enumerate}
		\item  $\delta(p)=1$ for all primes $p$
		\item  $\delta(mn)=m\delta(n)+n\delta(m)$ for all positive integers $m$ and $n$ (the Leibnitz rule)
	\end{enumerate}
	
	\medskip
	\noindent
	
	Let $n$ a positive integer , if $n=\prod_{i=1}^{s}p_{i}^{\alpha_{i}}$ is the prime factorization of $n$, then
	the formula for computing the arithmetic derivative
	of n is (see, e.g., \cite{Ba, UA}) giving by :
	\begin{equation}
		\delta(n)=n\sum \limits_{i=1}^s\frac{\alpha_i}{p_i}
		=n\sum \limits_{p^{\alpha}||n}\frac{\alpha}{p}
	\end{equation}
	A brief summary on the history of arithmetic derivative and its generalizations to other number sets can be found, e.g., in \cite{Ba, UA, HMT}. 
	
	Similarly, one can define {\it the arithmetic logarithmic derivative} \cite{UA} as
	$$
	{\rm ld}(n)=\frac{\delta(n)}{n}.
	$$
	
	First of all, to cultivate analytic number theory one must acquire a considerable 
	skill for operating with arithmetic functions. We begin with a few elementary  considerations. 
	
	\begin{definition}[arithmetic function]
		An \textbf{arithmetic function} is a function $f:\mathbb{N}\longrightarrow \mathbb{C}$ with
		domain of definition the set of natural numbers $\mathbb{N}$ and range a subset of the set of complex numbers $\mathbb{C}$.
	\end{definition}
	
	\begin{definition}[multiplicative function]
		A function $f$ is called an \textbf{multiplicative function} if and
		only if : 
		\begin{equation}\label{eq:1}
			f(nm)=f(n)f(m)
		\end{equation}
		for every pair of coprime integers $n$,$m$. In case (\ref{eq:1}) is satisfied for every pair of integers $n$ and $m$ , which are not necessarily coprime, then the function $f$ is
		called \textbf{completely multiplicative}.
	\end{definition}
	
	Clearly , if $f$ are a multicative function , then
	$f(n)=f(p_1^{\alpha _1})\ldots f(p_s^{\alpha _s})$, 
	for any positive integer $n$ such that 
	$n = p_1^{\alpha _1}\ldots  p_s^{\alpha _s}$ , and if $f$ is completely multiplicative , so we have : $f(n)=f(p_1)^{\alpha _1}\ldots f(p_s)^{\alpha _s}$.
	
	The functions defined above are widely studied in the literature, (see, e.g., \cite{KM,LT,Mc,SC,Sc, Sh}). 
	
	\bigskip
	\begin{definition}[additive function]
		A function $f$ is called an \textbf{additive function} if and
		only if : 
		\begin{equation}
			f(nm)=f(n)+f(m)
		\end{equation}
		for every pair of coprime integers $n$,$m$. In case (3) is satisfied for every pair of integers $n$ and $m$ , which are not necessarily coprime, then the function $f$ is
		called \textbf{completely additive}.
	\end{definition}
	Clearly , if $f$ are a additive function , then
	$f(n)=f(p_1^{\alpha _1})+\ldots +f(p_s^{\alpha _s})$, 
	for any positive integer $n$ such that 
	$n = p_1^{\alpha _1}\ldots  p_s^{\alpha _s}$ , and if $f$ is completely additive , so we have : $f(n)=f(p_1)^{\alpha _1}+\ldots +f(p_s)^{\alpha _s}$.
	\bigskip
	\begin{definition}[L-additive function]
		We say that an arithmetic function $f$ is {\em Leibniz-additive} (or, {\em L-additive}, in short) (see, e.g., \cite{Ufna}) if there is a completely multiplicative function $h_f$ such that 
		\begin{equation}\label{gca}
			f(mn)=f(m)h_f(n)+f(n)h_f(m)
		\end{equation}
		for all positive integers $m$ and $n$. 
	\end{definition}
	Then $f(1)=0$ since $h_f(1)=1$. 
	The property \eqref{gca} may be considered a generalized Leibniz rule.
	For example, the arithmetic derivative $\delta$ is L-additive with $h_\delta(n)=n$, 
	since it satisfies the usual Leibniz rule 
	$$
	\delta(mn)=n\delta(m)+m\delta(n)
	$$
	for all positive integers $m$ and $n$, and the function $h_\delta(n)=n$ is completely multiplicative. Similarly, the arithmetic partial derivative respect to the prime $p$ is L-additive with $h_{\delta_p}(n)=n$.
	Further, all completely additive functions $f$ are L-additive with $h_f(n)=1$. For example, the logarithmic derivative of $n$ is completely additive since
	$$
	{\rm ld}(mn) = {\rm ld}(m)+{\rm ld}(n).
	$$

	The term ``L-additive function" seems to be new in the literature, yet Chawla \cite{Ch} has defined  the concept of completely distributive arithmetic function meaning the same as we do with an L-additive function. However, this is a somewhat misleading term since a distributive arithmetic function usually refers to a property that 
	\begin{equation}\label{distr}
		f(u\ast v)=(fu)\ast(fv),
	\end{equation}
	i.e., the function $f$ distributes over the Dirichlet convolution. This is satisfied by completely multiplicative arithmetic functions, not by completely distributive functions as Chawla defined them. 
	
	Because L-additivity is analogous with generalized additivity and generalized multiplicativity (defined in \cite{Ha}), we could, alternatively, speak about generalized complete additivity (and also define the concept of generalized complete multiplicativity). 
	
	In this paper, we consider L-additive functions especially from the viewpoint that they are generalizations of the arithmetic derivative. In the next section, we present their basic properties. In the last section, we study L-additivity and the arithmetic derivative in terms of the Dirichlet convolution.
	
	
	\begin{theorem}\label{the-1-1}
		Let $f$ be an arithmetic function. If $f$ is L-additive and
		$h_f$ is nonzero-valued, then $f/h_f$ is completely additive.
	\end{theorem}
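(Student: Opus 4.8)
The plan is to verify directly that the quotient $g := f/h_f$ satisfies the defining identity of complete additivity, namely $g(mn)=g(m)+g(n)$ for all positive integers $m$ and $n$ with no coprimality assumption. First I would note that $g$ is well-defined precisely because $h_f$ is assumed nonzero-valued, so dividing by $h_f(n)$ is legitimate for every $n$; this is exactly where the hypothesis is used.

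The computation itself is a one-step substitution. Starting from $g(mn)=f(mn)/h_f(mn)$, I would insert the L-additive expansion $f(mn)=f(m)h_f(n)+f(n)h_f(m)$ in the numerator and the complete multiplicativity $h_f(mn)=h_f(m)h_f(n)$ in the denominator, obtaining
$$
g(mn)=\frac{f(m)h_f(n)+f(n)h_f(m)}{h_f(m)h_f(n)}.
$$
Splitting the fraction into two terms and cancelling yields $\frac{f(m)}{h_f(m)}+\frac{f(n)}{h_f(n)}=g(m)+g(n)$, which is the desired identity.

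There is essentially no serious obstacle here: the result follows from the two given structural properties by an algebraic manipulation. The only point requiring care is to confirm that the cancellation is valid at every argument, which is guaranteed by the nonvanishing of $h_f$; this same condition ensures $h_f(m)$ and $h_f(n)$ are individually nonzero, so the two summands are each meaningful. I would close by remarking that the identity holds for arbitrary $m,n$ (not merely coprime ones) because both the L-additive rule and complete multiplicativity of $h_f$ are assumed to hold without coprimality, so the conclusion is complete additivity rather than mere additivity.
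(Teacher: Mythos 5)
Your proof is correct and is essentially identical to the paper's own argument: both substitute the L-additive identity $f(mn)=f(m)h_f(n)+f(n)h_f(m)$ and the complete multiplicativity $h_f(mn)=h_f(m)h_f(n)$ into $f(mn)/h_f(mn)$ and split the fraction. Your added remarks on where the nonvanishing hypothesis is used and why no coprimality restriction appears are accurate but do not change the route.
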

	
	\begin{proof} 
		If $f$ satisfies~\eqref{gca} and $h_f$ is never zero, then
		$$
		\frac{f(mn)}{h_f(mn)}=\frac{f(m)h_f(n)+f(n)h_f(m)}{h_f(m)h_f(n)}=
		\frac{f(m)}{h_f(m)}+\frac{f(n)}{h_f(n)},
		$$
		
	\end{proof}

	\begin{theorem}\label{totally}
		Let $n$ a positive integer , if $n=\prod_{i=1}^{s}p_{i}^{\alpha_{i}}$ is the prime factorization of $n$ and $f$ is L-additive with $h_f(p_1),\dots,h_f(p_s)\ne 0$, then 
		$$
		f(n)=h_f(n)\sum_{i=1}^s\frac{\alpha_i f(p_i)}{h_f(p_i)}. 
		$$		
	\end{theorem}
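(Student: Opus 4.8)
The plan is to reduce the statement to the complete additivity of the quotient $g := f/h_f$, exactly as in Theorem~\ref{the-1-1}, and then to unwind the prime factorization. A preliminary observation is that the hypothesis $h_f(p_1),\dots,h_f(p_s)\ne 0$ is precisely what makes the argument go through: since $h_f$ is completely multiplicative, every divisor $d$ of $n$ satisfies $h_f(d)=\prod_{i} h_f(p_i)^{\beta_i}$ with $0\le\beta_i\le\alpha_i$, so $h_f(d)\ne 0$; in particular $h_f(n)\ne 0$, and the quantities $f(d)/h_f(d)$ appearing below are well defined. Note that Theorem~\ref{the-1-1} as stated assumes $h_f$ is nonzero everywhere, but its proof only uses $h_f(m),h_f(n)\ne 0$ at the two arguments, so the identity $g(mn)=g(m)+g(n)$ continues to hold for every pair of divisors $m,n$ of $n$ under the weaker hypothesis here.

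Given this, I would establish the formula by a double induction. First I would treat a prime power: by induction on $k$, using the generalized Leibniz rule~\eqref{gca} and $h_f(p^{k})=h_f(p)^{k}$, one gets $f(p^{k})=k\,h_f(p)^{k-1}f(p)$, equivalently $g(p^{k})=k\,g(p)$. Then I would induct on the number $s$ of distinct prime divisors: writing $n=n'p_s^{\alpha_s}$ with $n'=\prod_{i=1}^{s-1}p_i^{\alpha_i}$ coprime to $p_s^{\alpha_s}$, the Leibniz rule gives $f(n)=f(n')h_f(p_s^{\alpha_s})+f(p_s^{\alpha_s})h_f(n')$; dividing by $h_f(n)=h_f(n')h_f(p_s^{\alpha_s})$ and applying the induction hypothesis together with the prime-power case yields $g(n)=\sum_{i=1}^{s}\alpha_i g(p_i)$. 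Substituting $g=f/h_f$ and multiplying back by $h_f(n)$ produces the claimed identity $f(n)=h_f(n)\sum_{i=1}^{s}\alpha_i f(p_i)/h_f(p_i)$.

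I do not expect a genuine obstacle here; the argument is a routine double induction once the localization of the nonvanishing hypothesis is noted. The only point demanding care is bookkeeping in the prime-power step — keeping the exponent $k-1$ on $h_f(p)$ correct through the inductive passage from $p^{k}$ to $p^{k+1}$ — and verifying at the end that the telescoping of the $h_f$ factors cleanly reconstitutes $h_f(n)$ as the common factor. Alternatively, one could bypass the explicit induction on $s$ entirely by invoking the additivity of $g$ directly: since $g(mn)=g(m)+g(n)$ on divisors of $n$, finite iteration gives $g(n)=g\big(\prod_i p_i^{\alpha_i}\big)=\sum_i\alpha_i g(p_i)$ in one stroke, which is the shortest route to the result.
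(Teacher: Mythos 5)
Your argument is correct, but note that the paper itself contains no proof of this theorem: its ``proof'' consists solely of the citation \cite[Theorem 2.4]{Ufna}, so you have supplied a self-contained argument where the paper defers entirely to the literature. Your route is the natural one implicit in the paper's setup, and your ``one-stroke'' alternative at the end is essentially how the cited source proceeds: by Theorem~\ref{the-1-1} the quotient $g=f/h_f$ is completely additive, complete additivity iterates to give $g\bigl(\prod_i p_i^{\alpha_i}\bigr)=\sum_i \alpha_i\,g(p_i)$, and multiplying back by $h_f(n)$ yields the formula; your double induction (prime-power case $f(p^k)=k\,h_f(p)^{k-1}f(p)$ via \eqref{gca}, then induction on the number of distinct primes) is a correct, fully expanded version of the same idea, and both verify without trouble. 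Your localization remark is a genuinely valuable point that the paper glosses over: Theorem~\ref{the-1-1} as stated assumes $h_f$ nonzero everywhere, whereas Theorem~\ref{totally} assumes only $h_f(p_1),\dots,h_f(p_s)\ne 0$, and you correctly observe that complete multiplicativity propagates nonvanishing to every integer built from these primes, which is all the division steps require. One small precision for the one-stroke variant: you need $g(ab)=g(a)+g(b)$ for all $a,b$ whose prime factors lie in $\{p_1,\dots,p_s\}$, not merely for divisors of $n$ (a product of two divisors of $n$ need not divide $n$), but this holds for exactly the reason you give, namely $h_f(a),h_f(b)\ne 0$. Also, your coprimality bookkeeping in the induction on $s$ is harmless but unnecessary, since L-additivity \eqref{gca} holds for all pairs $m,n$, not just coprime ones.
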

	\begin{proof}
		(see, e.g., \cite[Theorem 2.4]{Ufna})
	\end{proof}
	\medskip

	The next step is to extend the L-additive function to the set of rational numbers $\mathbb{Q}^*$ to us it in the Direchlet product of L-additive function , We start from the positive rationals.\\
	
	The shortest way is to use the theorem $1.1$ . Namely, if $x=\prod_{i=1}^{s}p_{i}^{x_{i}}$ is a factorization of a rational
	number $x$ in prime powers, (where some $x_i$ may be negative) then we put :
	\begin{equation}
		f(x)=h_f(x)\sum_{i=1}^s\frac{x_i f(p_i)}{h_f(p_i)}.
	\end{equation}
	and the same proof as in Theorem $1.1$ shows that this definition is still consistent with the Leibnitz rule for every L-additive function $f$ with $h_f\neq0$.
	\begin{lemma}\label{inverse}
		Let $n$ a positive integer , if $n=\prod_{i=1}^{s}p_{i}^{\alpha_{i}}$ is the prime factorization of $n$, then and $f$ is L-additive function with
		$h_f$ is nonzero-valued, then :
		\begin{equation}
			f\bigg(\frac{1}{n}\bigg)=\frac{-f(n)}{h^2_f(n)}
		\end{equation}
	\end{lemma}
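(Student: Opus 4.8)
The plan is to compute $f(1/n)$ directly from the extended definition of the L-additive function on rationals given in equation~(5), using the factorization $1/n = \prod_{i=1}^s p_i^{-\alpha_i}$. Since $h_f$ is completely multiplicative and nonzero-valued, we have $h_f(1/n) = 1/h_f(n)$, because $h_f(n)h_f(1/n) = h_f(1) = 1$. This reciprocal relationship is the key input and follows immediately from complete multiplicativity.

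First I would substitute the exponents $x_i = -\alpha_i$ into formula~(5), giving
$$
f\!\left(\frac{1}{n}\right) = h_f\!\left(\frac{1}{n}\right)\sum_{i=1}^s \frac{(-\alpha_i) f(p_i)}{h_f(p_i)} = -\,h_f\!\left(\frac{1}{n}\right)\sum_{i=1}^s \frac{\alpha_i f(p_i)}{h_f(p_i)}.
$$
Next I would invoke Theorem~\ref{totally}, which expresses the sum $\sum_{i=1}^s \frac{\alpha_i f(p_i)}{h_f(p_i)}$ as $f(n)/h_f(n)$. Substituting this in, together with $h_f(1/n) = 1/h_f(n)$, yields
$$
f\!\left(\frac{1}{n}\right) = -\,\frac{1}{h_f(n)}\cdot\frac{f(n)}{h_f(n)} = \frac{-f(n)}{h_f^2(n)},
$$
which is exactly the claimed identity.

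The argument is essentially a bookkeeping exercise, so I do not anticipate a serious conceptual obstacle. The one point requiring care is the justification that $h_f(1/n) = 1/h_f(n)$: this rests on extending the completely multiplicative $h_f$ to $\mathbb{Q}^*$ in a consistent way, which the preceding paragraph has already arranged via the same prime-power factorization convention. I would also note in passing that the well-definedness of $f$ on rationals — independence of the chosen factorization — is guaranteed by the remark following equation~(5) that the extension remains consistent with the Leibniz rule, so no separate verification is needed here.
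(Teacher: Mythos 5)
Your proposal is correct and follows essentially the same route as the paper: substitute the negative exponents $x_i=-\alpha_i$ into the rational extension formula, replace $\sum_{i=1}^s \alpha_i f(p_i)/h_f(p_i)$ by $f(n)/h_f(n)$ via Theorem~\ref{totally}, and finish with $h_f(n)\,h_f\bigl(\tfrac{1}{n}\bigr)=h_f(1)=1$, which is exactly the paper's justification as well. The only difference is cosmetic (you cite the extension formula by a shifted equation number and make the well-definedness remark explicit), so nothing further is needed.
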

	\begin{proof} 
		Let $n$ a positive integer , if $n=\prod_{i=1}^{s}p_{i}^{\alpha_{i}}$ is the prime factorization of $n$ , then we have by the formula $(6)$ : 
		$$
		f\bigg(\frac{1}{n}\bigg)=h_f\bigg(\frac{1}{n}\bigg)\sum_{i=1}^s\frac{-\alpha_i f(p_i)}{h_f(p_i)}=-h_f\bigg(\frac{1}{n}\bigg)\sum_{i=1}^s\frac{\alpha_i f(p_i)}{h_f(p_i)}
		$$
		Since $f(n)=h_f(n)\sum_{i=1}^s\frac{\alpha_i f(p_i)}{h_f(p_i)} $ then , $\sum_{i=1}^s\frac{\alpha_i f(p_i)}{h_f(p_i)}=\frac{f(n)}{h_f(n)}$ , so we have :
		$$f\bigg(\frac{1}{n}\bigg)=-h_f\bigg(\frac{1}{n}\bigg).\frac{f(n)}{h_f(n)}
		=-h_f(n).h_f\bigg(\frac{1}{n}\bigg).\frac{f(n)}{h_f^2(n)}=\frac{-f(n)}{h^2_f(n)}
		$$
		because $h_f$ is multiplicative and $h_f(n).h_f\bigg(\frac{1}{n}\bigg)=h_f\bigg(\frac{n}{n}\bigg)=h_f(1)=1$ .
	\end{proof}
	
	\begin{theorem}\label{quotient}
		Lets $n$ and $m$ two positive integers with $m\neq0$. and $f$ is L-additive function with
		$h_f$ is nonzero-valued, then we have :
		\begin{equation}
			f\bigg(\frac{n}{m}\bigg)=\frac{f(n)h_f(m)-f(m)h_f(n)}{h^2_f(m)}
		\end{equation}
		A L-additive can be well defined for rational numbers using this formula and this is the only
		way to define a L-additive over rationals that preserves the Leibnitz rule.
	\end{theorem}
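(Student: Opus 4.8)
The plan is to reduce the quotient to a product involving a reciprocal and then lean on the two facts already available: the generalized Leibniz rule, now valid on $\mathbb{Q}^*$ (as noted immediately after formula~(6)), and Lemma~\ref{inverse}. Concretely, I would write $\frac{n}{m}=n\cdot\frac{1}{m}$ and apply the extended Leibniz rule to obtain
$$
f\!\left(\frac{n}{m}\right)=f(n)\,h_f\!\left(\frac{1}{m}\right)+f\!\left(\frac{1}{m}\right)h_f(n).
$$
This single identity is the backbone of the whole argument; everything else is the evaluation of the two auxiliary quantities appearing on the right.

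For those evaluations I would first use that $h_f$ is completely multiplicative together with $h_f(m)\,h_f(1/m)=h_f(1)=1$ to conclude $h_f(1/m)=1/h_f(m)$. Next I would invoke Lemma~\ref{inverse} directly to replace $f(1/m)$ by $-f(m)/h_f^2(m)$. Substituting both into the displayed identity gives
$$
f\!\left(\frac{n}{m}\right)=\frac{f(n)}{h_f(m)}-\frac{f(m)\,h_f(n)}{h_f^2(m)},
$$
and putting the two terms over the common denominator $h_f^2(m)$ yields exactly the claimed expression. I expect no real obstacle in this computational half, since it is a direct substitution using results proved above.

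The more delicate part is the uniqueness assertion, and I regard it as the main obstacle. To handle it I would take an arbitrary extension $g$ of $f$ to $\mathbb{Q}^*$ that is L-additive with the same completely multiplicative $h_f$, and show it is forced to coincide with $f$. From $g(1)=g(1\cdot 1)=2g(1)h_f(1)=2g(1)$ one gets $g(1)=0$; applying the Leibniz rule to $1=m\cdot(1/m)$ then forces $g(1/m)=-g(m)/h_f^2(m)$, precisely mirroring Lemma~\ref{inverse}; and applying it to $n/m=n\cdot(1/m)$ reproduces the boxed formula. Since $g$ agrees with $f$ on $\mathbb{N}$, it must agree everywhere on $\mathbb{Q}^*$. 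The subtlety here is to make sure the Leibniz rule together with the prescribed $h_f$ genuinely pins down every value, and that the consistency remark following formula~(6) guarantees the definition is independent of the chosen prime factorization, so that the formula is both well defined and unique.
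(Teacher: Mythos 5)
Your computational half coincides exactly with the paper's proof: the paper likewise writes $f\bigl(\frac{n}{m}\bigr)=f\bigl(n\cdot\frac{1}{m}\bigr)=h_f(n)f\bigl(\frac{1}{m}\bigr)+h_f\bigl(\frac{1}{m}\bigr)f(n)$, then substitutes $f\bigl(\frac{1}{m}\bigr)=-f(m)/h_f^2(m)$ from Lemma~\ref{inverse} and $h_f\bigl(\frac{1}{m}\bigr)=1/h_f(m)$, so on that part there is nothing to compare. Where you genuinely go beyond the paper is the uniqueness assertion: the statement of Theorem~\ref{quotient} claims the formula is ``the only way to define a L-additive over rationals that preserves the Leibnitz rule,'' but the paper's proof stops after the substitution and never argues this. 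Your argument for it is sound: for any extension $g$ agreeing with $f$ on $\mathbb{N}$ and satisfying the Leibniz rule with the same (multiplicatively extended) $h_f$, the identity $g(1)=2g(1)h_f(1)$ forces $g(1)=0$, then $1=m\cdot\frac{1}{m}$ forces $g\bigl(\frac{1}{m}\bigr)=-g(m)/h_f^2(m)$, and then $\frac{n}{m}=n\cdot\frac{1}{m}$ pins down every value, so $g$ coincides with the formula. You are also right to flag well-definedness (independence of the representation $\frac{n}{m}$ of a given rational) as the remaining subtlety; the paper disposes of it only by the remark following formula~(6) that the factorization-based definition is consistent with the Leibniz rule, and your proof correctly leans on that same remark rather than reproving it. In short: same route as the paper for the displayed identity, plus a correct proof of the uniqueness clause that the paper asserts without proof.
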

	
	\begin{proof} 
		If $n$ and $m$ two positive integers with $m\neq0$ and $h_f$ is never zero, then :
		$$
		f\bigg(\frac{n}{m}\bigg)=f\bigg(n.\frac{1}{m}\bigg)=h_f(n)f\bigg(\frac{1}{m}\bigg)+h_f\bigg(\frac{1}{m}\bigg)f(n)
		$$
		Since by the lemma \ref{inverse} we have : $	f\big(\frac{1}{m}\big)=\frac{-f(m)}{h^2_f(m)}$ , and $h_f\big(\frac{1}{m}\big)=\frac{1}{h_f(m)}$ , then 
		$$   
		f\bigg(\frac{n}{m}\bigg)=\frac{f(n)}{h_f(m)}-\frac{h_f(n)f(m)}{h^2_f(m)}=\frac{f(n)h_f(m)-f(m)h_f(n)}{h^2_f(m)}
		$$
	\end{proof} 
	for all positive integers $n$ and $m$ , The theorem \ref{quotient} may
	be considered a generalized Leibniz rule in the set of rational number $\mathbb{Q}$. This terminology arises from the observation that the arithmetic derivative is L-additive with $h_\delta=n$ ; it satisfies the usual Leibniz rule of quotient :
	$$\delta(\frac{n}{m})=
	\frac{\delta(n)h_\delta(m)-\delta(m)h_\delta(n)}{h^2_\delta(m)} 
	=\frac{m\delta(n)-n\delta(m)}{m^2} 
	$$
	Further, all completely additive functions $f$ are L-additive with $h_f(n) = 1$ , then we can extended any completely addtive function to the set of rational number $\mathbb{Q}$ by this formula :
	$$f\left(\frac{n}{m} \right) =f(n)-f(m)$$ 
	
	For example, the logarithmic derivative of $n$ is completely additive, then we have :
	$$ ld\left(\frac{n}{m} \right)=ld(n)-ld(m)$$
	

	\section{L-additive functions in terms of the Dirichlet convolution}
	
	Above we have seen that many fundamental properties of the extanded of the L-additive function to the set of rational number. We complete this article by changing our point of view slightly and demonstrate that L-additive functions can also be studied in terms of the Dirichlet convolutions by using the theorem \ref{quotient}. 
	
	\medskip
	
	Let $f$ and $g$ be arithmetic functions. Their {\it Dirichlet convolution} is
	$$
	(f\ast g)(n)=\sum_{\substack{a,b=1\\ab=n}}^n f(a)g(b)=\sum_{\substack{d|n}}^n f(d)g\left(\frac{n}{d} \right) .
	$$
	
	where the sum extends over all positive divisors $d$ of $n$ , or equivalently over all distinct pairs $(a, b)$ of positive integers whose product is $n$.\\
	In particular, we have $(f*g)(1)=f(1)g(1)$ ,$(f*g)(p)=f(1)g(p)+f(p)g(1)$ for any prime $p$ and for any power prime $p^m$ we have :
	\begin{equation}
		(f*g)(p^m)=\sum \limits_{j=0}^m f(p^j)g(p^{m-j})
	\end{equation} 
	This product occurs naturally in the study of Dirichlet series such as the Riemann zeta function. It describes the multiplication of two Dirichlet series in terms of their coefficients: 
	\begin{equation}\label{eq:5}
		\bigg(\sum \limits_{n\geq 1}\frac{\big(f*g\big)(n)}{n^s}\bigg)=\bigg(\sum \limits_{n\geq 1}\frac{f(n)}{n^s} \bigg)
		\bigg( \sum \limits_{n\geq 1}\frac{g(n)}{n^s} \bigg)
	\end{equation}
	with Riemann zeta function or  is defined by : $$\zeta(s)= \sum \limits_{n\geq 1} \frac{1}{n^s}$$
	These functions are widely studied in the literature (see, e.g., \cite{book1, book2, book3}).\\

	We let $f(u\ast v)$ denote the product function of $f$ and $u\ast v$, i.e.,
	$$
	(f(u\ast v))(n) = f(n)(u\ast v)(n).
	$$
	
	\begin{theorem}\label{ad-conv}
		An arithmetic function $f$ is completely additive if and only if
		$$
		f(u\ast v) = (fu)\ast v+u\ast (fv)
		$$
		for all arithmetic functions $u$ and $v$.
	\end{theorem}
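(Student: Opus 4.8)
The plan is to prove the two implications separately: the forward direction by a direct manipulation of the convolution sum, and the converse by specializing $u$ and $v$ to suitable indicator functions. For the forward direction I would assume $f$ is completely additive, so that $f(ab)=f(a)+f(b)$ for \emph{all} positive integers $a,b$, and then evaluate the left-hand side at an arbitrary $n$, pushing the factor $f(n)$ inside the Dirichlet sum. Since every pair $(a,b)$ with $ab=n$ satisfies $f(n)=f(ab)=f(a)+f(b)$, this gives
$$
(f(u\ast v))(n)=\sum_{ab=n} f(ab)\,u(a)v(b)=\sum_{ab=n} f(a)u(a)v(b)+\sum_{ab=n} u(a)f(b)v(b),
$$
and recognizing the two sums as $((fu)\ast v)(n)$ and $(u\ast(fv))(n)$ finishes this direction. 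The point to check carefully is that $f(ab)$ may be replaced by $f(a)+f(b)$ uniformly over \emph{all} divisor pairs, with no coprimality restriction, which is exactly what complete additivity grants and why ordinary additivity would not suffice.

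For the converse I would assume the convolution identity holds for all $u,v$ and test it on the indicator functions $e_a$, defined by $e_a(n)=1$ if $n=a$ and $e_a(n)=0$ otherwise. A short computation gives $e_a\ast e_b=e_{ab}$ and $f e_a=f(a)e_a$, so substituting $u=e_a$, $v=e_b$ yields
$$
f(ab)\,e_{ab}=f(u\ast v)=(fu)\ast v+u\ast(fv)=f(a)e_{ab}+f(b)e_{ab}=(f(a)+f(b))\,e_{ab}.
$$
Evaluating both sides at $n=ab$ gives $f(ab)=f(a)+f(b)$ for all positive integers $a,b$, which is precisely complete additivity (the case $a=b=1$ incidentally recovers $f(1)=0$).

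I expect the only (minor) obstacle to be the converse: one must select a family of test functions rich enough to recover the single value $f(ab)$ for every pair $(a,b)$. The indicator functions $e_a$ are the natural choice, since they behave multiplicatively under convolution, $e_a\ast e_b=e_{ab}$, while simultaneously isolating a single value of $f$ through $fe_a=f(a)e_a$. The forward direction is then a routine rearrangement once complete additivity is invoked inside the sum.
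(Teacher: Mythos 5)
Your proof is correct in both directions. Be aware, however, that the paper itself offers no argument for this theorem: its ``proof'' is a bare citation to Schwab's Proposition~2 in \cite{Sc}, so your write-up is a genuinely self-contained proof rather than a variant of the paper's. Your forward direction is the standard computation, and you correctly isolate the crux: $f(n)$ can be replaced by $f(a)+f(b)$ uniformly over \emph{all} ordered divisor pairs $ab=n$, which is exactly what complete additivity (as opposed to mere additivity, which would only cover coprime pairs) licenses. Your converse via the point masses $e_a$ is clean and the key identities check out: $(e_a\ast e_b)(n)=\sum_{cd=n}e_a(c)e_b(d)$ has exactly one nonzero term, namely $(c,d)=(a,b)$, so $e_a\ast e_b=e_{ab}$, and $fe_a=f(a)e_a$ since $e_a$ is supported at the single point $a$; evaluating $f(ab)e_{ab}=(f(a)+f(b))e_{ab}$ at $n=ab$ then yields complete additivity in full, with $f(1)=0$ as the case $a=b=1$. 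Two things your approach buys that the citation does not: the argument makes visible that the converse needs the identity only for the two-parameter family $u=e_a$, $v=e_b$ (a much weaker hypothesis than ``for all $u,v$''), and it keeps the paper self-contained at the one point where Theorem~\ref{ad-conv} is actually used later, namely the alternative derivation of Theorem~\ref{main-the} via the completely additive function $f/h_f$.
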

	
	\begin{proof} 
		(see, e.g., \cite[Proposition 2]{Sc}). 
	\end{proof} 
	
	Next theorems shows the dirichlet convolution of arithmetic function with L-additive functions. 
	
	\begin{theorem}\label{main-the}
		Lets $f$ and $g$ be two arithmetics functions. If $f$ is L-additive and $h_f$ is nonzero-valued, then :
		\begin{eqnarray}
			(f\ast g)(n) = \frac{f(n)}{h_f(n)}(h_f\ast g)(n)-\left( h_f\ast \frac{fg}{h_f}\right) (n)
		\end{eqnarray}
	\end{theorem}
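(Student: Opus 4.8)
The plan is to reduce everything to the fact, established in Theorem \ref{the-1-1}, that $F := f/h_f$ is completely additive whenever $f$ is L-additive with $h_f$ nonzero-valued. The asserted identity is really a convolution-level avatar of the relation $F(ab) = F(a) + F(b)$, so I would first rewrite both terms on the right-hand side as sums over divisors and then collapse them using complete additivity.

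First I would expand the two convolutions on the right using $(u \ast v)(n) = \sum_{d \mid n} u(d)\, v(n/d)$. The first term becomes $F(n) \sum_{d \mid n} h_f(d)\, g(n/d)$, where I have written $f(n)/h_f(n) = F(n)$. For the second term I would evaluate the pointwise product function $\tfrac{fg}{h_f}$ at the argument $n/d$, obtaining $\tfrac{f(n/d)\, g(n/d)}{h_f(n/d)} = F(n/d)\, g(n/d)$, so that $\left( h_f \ast \tfrac{fg}{h_f} \right)(n) = \sum_{d \mid n} h_f(d)\, F(n/d)\, g(n/d)$; note this is where $h_f$ being nonzero-valued is used, to license the division by $h_f(n/d)$ inside the sum.

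Next I would subtract the two sums term by term, factoring out the common weight $h_f(d)\, g(n/d)$ to get $\sum_{d \mid n} h_f(d)\, g(n/d) \bigl( F(n) - F(n/d) \bigr)$. The crucial step is to invoke complete additivity: writing $n = d \cdot (n/d)$ gives $F(n) = F(d) + F(n/d)$, hence $F(n) - F(n/d) = F(d)$. Substituting this and unfolding $h_f(d)\, F(d) = h_f(d) \cdot f(d)/h_f(d) = f(d)$ turns the sum into $\sum_{d \mid n} f(d)\, g(n/d) = (f \ast g)(n)$, which is exactly the left-hand side.

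I do not anticipate a genuine obstacle here; the entire content lies in recognizing that the difference $F(n) - F(n/d)$ telescopes to $F(d)$ by complete additivity. The only points requiring care are purely bookkeeping: correctly interpreting $\tfrac{fg}{h_f}$ as the pointwise product function so that its value at $n/d$ is $F(n/d)\, g(n/d)$ rather than anything involving a convolution, and keeping track of the nonvanishing hypothesis on $h_f$ throughout.
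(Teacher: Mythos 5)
Your proof is correct, but it takes a genuinely different route from the paper's main argument. The paper first extends $f$ to positive rationals and proves a quotient rule (Theorem \ref{quotient}), $f\left(\frac{n}{d}\right)=\frac{h_f(d)f(n)-h_f(n)f(d)}{h_f^2(d)}$, then expands $(g\ast f)(n)=\sum_{d\mid n}g(d)f\left(\frac{n}{d}\right)$ and splits the sum, finally using complete multiplicativity of $h_f$ to rewrite $f(n)\left(1\ast\frac{g}{h_f}\right)(n)$ as $\frac{f(n)}{h_f(n)}(h_f\ast g)(n)$ and $h_f(n)\left(1\ast\frac{fg}{h_f^2}\right)(n)$ as $\left(h_f\ast\frac{fg}{h_f}\right)(n)$. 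You instead stay entirely inside $\mathbb{N}$: you expand both right-hand convolutions, factor out $h_f(d)\,g(n/d)$, and collapse $F(n)-F(n/d)$ to $F(d)$ by the complete additivity of $F=f/h_f$ (Theorem \ref{the-1-1}), which needs no rational extension machinery at all. Your telescoping argument is in fact an inline, self-contained proof of the relevant instance of Schwab's identity (Theorem \ref{ad-conv}), which the paper invokes only in a remark after its proof as an alternative derivation: applying $F\cdot(h_f\ast g)=(Fh_f)\ast g+h_f\ast(Fg)$ gives the theorem in one line. So your route buys independence from the extension of $f$ to $\mathbb{Q}$ (Lemma \ref{inverse} and Theorem \ref{quotient} become unnecessary for this result), while the paper's route exercises the quotient rule it has just developed; both correctly use the nonvanishing of $h_f$ exactly where divisions occur, and both reductions are complete, so there is no gap to repair.
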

	
	\begin{proof}
		Let $f$ and $g$ be two arithmetic functions. If $f$ is L-additive and $h_f$ is
		nonzero-valued, then applying  the theorem \ref{quotient} on $f\left(\frac{n}{d} \right) $ we have :
		\begin{align*}
			(g*f)(n) &=\sum \limits_{d|n} g(d)f\big(\frac{n}{d}\big)
			=\sum \limits_{d|n} g(d)\bigg( \frac{h_f(d)f(n)-h_f(n)f(d)}{h_f^2(d)}\bigg)
			\\
			&=\sum \limits_{d|n} g(d)\bigg( \frac{f(n)}{h_f(d)}-\frac{h_f(n)f(d)}{h_f^2(d)}\bigg) 
			\\
			&=f(n)\sum \limits_{d|n}\frac{g(d)}{h_f(d)}-h_f(n)\sum \limits_{d|n}\frac{f(d)f(d)}{h_f^2(d)}
			\\
			& =f(n)\left(1 \ast \frac{g}{h_f}\right)(n)-h_f(n)\left(1 \ast \frac{f.g}{h_f^2} \right)(n)
			\\
			&=\frac{f(n)}{h_f(n)}\left(h_f \ast g \right)(n) -\left(h_f \ast \frac{f.g}{h_f} \right)(n)
		\end{align*}
	\end{proof}
	We can prove this formula using the theorem  (\ref{ad-conv}) , since the arithmetic function $\frac{f}{h_f}$ is completely additive by the theorem (\ref{the-1-1}) , so we have :
	$$
	\frac{f}{h_f}\left(h_f \ast g \right)=\left(\frac{f}{h_f}h_f \ast g \right) +\left(h_f \ast \frac{f}{h_f}g \right) =\left(f \ast g \right) +\left(h_f\ast \frac{fg}{h_f} \right) 
	$$ 
	
\begin{corollary}\label{cor-2-1}
	If $f$ is L-additive and $h_f$ is nonzero-valued, then :
	$$
	f\ast \mu h_f=-h_f\ast\mu f
	$$
\end{corollary}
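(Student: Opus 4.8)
The plan is to specialize Theorem~\ref{main-the} to a single, well-chosen function $g=\mu h_f$, where $\mu$ is the M\"obius function. The engine behind the identity is the classical fact that the Dirichlet inverse of a completely multiplicative function is obtained by multiplying it by $\mu$: since $h_f$ is completely multiplicative, $h_f\ast\mu h_f=\varepsilon$, where $\varepsilon$ denotes the Dirichlet identity ($\varepsilon(1)=1$ and $\varepsilon(n)=0$ for $n>1$). I would first record this explicitly, writing $(h_f\ast\mu h_f)(n)=\sum_{d\mid n}h_f(n/d)\,\mu(d)\,h_f(d)=h_f(n)\sum_{d\mid n}\mu(d)$ and invoking $\sum_{d\mid n}\mu(d)=\varepsilon(n)$, using complete multiplicativity to pull out $h_f(d)h_f(n/d)=h_f(n)$.

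Next I would substitute $g=\mu h_f$ into the conclusion of Theorem~\ref{main-the}:
$$
(f\ast\mu h_f)(n)=\frac{f(n)}{h_f(n)}(h_f\ast\mu h_f)(n)-\left(h_f\ast\frac{f\cdot\mu h_f}{h_f}\right)(n).
$$
The first term on the right collapses: by the previous step it equals $\frac{f(n)}{h_f(n)}\varepsilon(n)$, which is $0$ for every $n>1$ because $\varepsilon(n)=0$, and is also $0$ at $n=1$ because $f(1)=0$ (as noted after the definition of L-additivity). Hence this term vanishes identically on $\mathbb{N}$.

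It then remains only to simplify the pointwise quotient inside the surviving convolution. Since $h_f$ is nonzero-valued, $\frac{f\cdot\mu h_f}{h_f}(n)=\frac{f(n)\mu(n)h_f(n)}{h_f(n)}=f(n)\mu(n)=(\mu f)(n)$, so the right-hand side reduces to $-(h_f\ast\mu f)(n)$, giving $f\ast\mu h_f=-h_f\ast\mu f$ as claimed.

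I do not expect a genuine obstacle; the only place demanding care is the bookkeeping at $n=1$, where one must note that $f(1)=0$ rather than merely that $\varepsilon$ vanishes, so that the first term is annihilated everywhere. An alternative route avoids even recalling the inverse formula: one can instead feed $g=\mu h_f$ into the identity $\frac{f}{h_f}(h_f\ast g)=f\ast g+h_f\ast\frac{fg}{h_f}$ established in the remark following Theorem~\ref{main-the}, whose left-hand side is $\frac{f}{h_f}\varepsilon=0$; but either way the decisive input is $h_f\ast\mu h_f=\varepsilon$.
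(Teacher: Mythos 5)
Your proof is correct and takes essentially the same route as the paper: substitute $g=\mu h_f$ into Theorem~\ref{main-the}, collapse $h_f\ast\mu h_f$ using complete multiplicativity of $h_f$ together with $\sum_{d\mid n}\mu(d)=\epsilon(n)$, and simplify $\frac{f\mu h_f}{h_f}$ to $\mu f$. Your bookkeeping at $n=1$ (invoking $f(1)=0$) is in fact more careful than the paper's somewhat garbled display, but the argument is the same.
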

\begin{proof}
	We substitute $g=\mu h_f$ into (\ref{main-the}) gives : 
	$$(f\ast \mu h_f)(n) = \frac{f(n)}{h_f(n)}(h_f\ast \mu h_f)(n)-\left( h_f\ast \frac{f\mu h_f}{h_f}\right) (n)$$
	Since : $$\frac{f(n)}{h_f(n)}(h_f\ast \mu h_f)(n)=f(n)\left(1\ast\mu \right)f(n)\epsilon(n)=0 $$ then we have :
	$$
	(f\ast \mu h_f)(n)=-\left(h_f\ast\mu f \right)(n)
	$$
\end{proof}

\medskip
\noindent
As we are aware, the arithmetic derivative $\delta$ is an L-additive function with $h_{\delta}(n)=Id(n)=n$ , then by using the theorem (\ref{main-the}) we have this corollary :
	\begin{corollary}\label{cor-2-2}
		Given an arithmetic function $g$, then for every positive integer not null $n$  we have :  
		\begin{equation}
			(\delta \ast g)(n)=\frac{\delta(n)}{n}\bigg(Id\ast g\bigg)(n)-\bigg(Id*\frac{g.\delta}{Id}\bigg)(n)
		\end{equation}
	\end{corollary}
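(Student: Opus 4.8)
The plan is to obtain this identity as a direct specialization of Theorem~\ref{main-the}. That theorem asserts that for any arithmetic function $g$ and any L-additive $f$ with nonzero-valued $h_f$,
$$
(f\ast g)(n) = \frac{f(n)}{h_f(n)}(h_f\ast g)(n)-\left( h_f\ast \frac{fg}{h_f}\right) (n).
$$
First I would recall, from the discussion following the definition of L-additivity, that the arithmetic derivative $\delta$ is L-additive with completely multiplicative companion $h_\delta = Id$, where $Id(n)=n$; this is exactly the usual Leibniz rule $\delta(mn)=n\delta(m)+m\delta(n)$.

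The one hypothesis of Theorem~\ref{main-the} that must be checked is that $h_\delta$ is nonzero-valued. Since $Id(n)=n\neq 0$ for every positive integer $n$, this is immediate, so the theorem applies with $f=\delta$ and $h_f = Id$.

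Substituting $f=\delta$ and $h_f = Id$ into the displayed formula then yields
$$
(\delta \ast g)(n)=\frac{\delta(n)}{Id(n)}\bigl(Id\ast g\bigr)(n)-\left(Id\ast \frac{\delta g}{Id}\right)(n),
$$
and replacing $Id(n)$ by $n$ in the denominator of the first term gives precisely the claimed identity. Since the order of the arguments in the Dirichlet convolution is immaterial (convolution is commutative), the term $\tfrac{\delta g}{Id}$ may be written as $\tfrac{g\delta}{Id}$ without change.

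I do not expect any genuine obstacle here, as the result is a corollary by pure substitution; the only point requiring a word of justification is the verification that $h_\delta=Id$ is never zero on $\mathbb{N}$, which guarantees that every quotient appearing in the formula is well defined.
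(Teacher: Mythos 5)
Your proposal is correct and follows exactly the paper's route: the paper's proof is the one-line remark that it suffices to notice $h_\delta(n)=Id(n)=n$, i.e.\ a pure substitution of $f=\delta$, $h_f=Id$ into Theorem~\ref{main-the}, which is precisely what you do. Your additional check that $Id$ is nonzero-valued on $\mathbb{N}$ is the right (implicit) hypothesis verification and is a welcome extra word of care.
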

	
	\begin{proof} 
		It suffices to notice that $h_{\delta}(n)=Id(n)=n$. 
	\end{proof} 
	\bigskip 
	Now, taking $g(n)=Id(n)=n$, then  the corollary (\ref{cor-2-2}) becomes :
	$$
	(Id\ast \delta)(n)= \frac{\delta(n)}{n}\bigg(Id*Id\bigg)(n)-\bigg(Id*\frac{Id.\delta}{Id}\bigg)(n) 
	=\delta(n)\tau(n)-\left(Id \ast \delta \right)(n)
	$$
	So we have this proven formula in the paper (see, e.g., \cite[Proposition 6]{naoui}) and (see, e.g., \cite[Proposition 2]{Sc}) :
	\begin{equation}\label{equ:1}
		(Id\ast \delta)(n)=\frac{1}{2}\tau(n)\delta(n)	
	\end{equation}
	where $\tau(n)$ is the divisor-number-function .
	
	\bigskip
	We know that $1\ast Id=\sigma $ where $\sigma(n)$ is the sum of the (positive) divisors of $n$ and  $1(n)=1$ for all positive integers $n$ , then by the equality \ref{equ:1} we have :
	\begin{corollary}\label{equ3}
		For every integer $n$ not null  we have :
		\begin{equation}
			\big(\sigma*\delta\big)(n)=\frac{1}{2} \big(1*\tau.\delta\big)(n)
		\end{equation}
	\end{corollary}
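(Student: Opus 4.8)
The plan is to reduce the statement entirely to the already-established identity~\eqref{equ:1}, using only the associativity and linearity of the Dirichlet convolution. First I would recall the standard factorization $\sigma = 1 \ast Id$, which the text has just noted (indeed $(1\ast Id)(n)=\sum_{d\mid n} Id(n/d)=\sum_{d\mid n} d = \sigma(n)$). Associativity of the convolution then gives
$$
\sigma \ast \delta = (1 \ast Id) \ast \delta = 1 \ast (Id \ast \delta).
$$

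Next I would read~\eqref{equ:1} not as a numerical statement but as a genuine equality of arithmetic functions, namely $Id \ast \delta = \tfrac{1}{2}\,\tau\delta$, where $\tau\delta$ denotes the pointwise product $n \mapsto \tau(n)\delta(n)$. Substituting this into the previous line yields $1 \ast (Id \ast \delta) = 1 \ast \bigl(\tfrac{1}{2}\tau\delta\bigr)$. Finally, pulling the scalar $\tfrac{1}{2}$ out of the convolution by linearity in the second argument produces
$$
1 \ast \Bigl(\tfrac{1}{2}\tau\delta\Bigr) = \tfrac{1}{2}\bigl(1 \ast \tau\delta\bigr),
$$
which is exactly the claimed right-hand side, giving $\sigma \ast \delta = \tfrac{1}{2}(1 \ast \tau\delta)$.

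There is essentially no hard step here; the argument is three lines of formal bookkeeping, and the property doing the real work is the associativity of the Dirichlet convolution. The only point worth a moment's care is the legitimacy of convolving~\eqref{equ:1} with $1$: this is immediate precisely because \eqref{equ:1} holds for \emph{every} positive integer $n$, so it is a true identity of functions and not merely a value-by-value coincidence. Likewise, the factoring-out of $\tfrac12$ is justified directly by the defining formula $(u \ast v)(n) = \sum_{d \mid n} u(d)\,v(n/d)$, which is linear in each slot. I would therefore expect the proof to be purely mechanical, with the one genuine structural input being associativity combined with the prior corollary encoded in~\eqref{equ:1}.
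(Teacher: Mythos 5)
Your proof is correct and follows exactly the route the paper intends: the paper states the corollary as an immediate consequence of $\sigma = 1 \ast Id$ together with the identity $(Id \ast \delta)(n) = \tfrac{1}{2}\tau(n)\delta(n)$ from~\eqref{equ:1}, which is precisely your associativity-plus-linearity argument. You have merely made explicit the bookkeeping the paper leaves implicit, so there is nothing to add.
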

	
	\begin{corollary}\label{cor-2-3}
		For every positive integer $n$ not null  we have :
		\begin{equation}
			\delta(n)=\frac{1}{2} \big(Id.\mu*\tau.\delta\big)(n)
		\end{equation}
	\end{corollary}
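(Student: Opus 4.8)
The plan is to obtain this identity by a Möbius-type inversion of the already-established relation \eqref{equ:1}, read as an identity between arithmetic functions rather than as a statement about individual values. First I would rewrite \eqref{equ:1} in the functional form
$$
Id \ast \delta = \tfrac{1}{2}\,(\tau.\delta),
$$
where $\tau.\delta$ denotes the pointwise product, so that the whole relation is an equality of functions under the Dirichlet convolution $\ast$.

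Next I would recall that $Id$ is completely multiplicative, so its Dirichlet inverse is the pointwise product $Id.\mu$; concretely, $(Id.\mu)\ast Id = \epsilon$, where $\epsilon$ is the identity of the Dirichlet convolution. This is the single computational point to verify, and it follows from
$$
\big((Id.\mu)\ast Id\big)(n)=\sum_{d\mid n} d\,\mu(d)\,\frac{n}{d}=n\sum_{d\mid n}\mu(d)=n\,\epsilon(n)=\epsilon(n).
$$

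With this in hand, I would convolve both sides of the functional form of \eqref{equ:1} on the left by $Id.\mu$ and use associativity of $\ast$:
$$
(Id.\mu)\ast(Id\ast\delta)=\big((Id.\mu)\ast Id\big)\ast\delta=\epsilon\ast\delta=\delta,
$$
while the right-hand side becomes $\tfrac{1}{2}\,(Id.\mu)\ast(\tau.\delta)$. Equating the two sides yields exactly the claimed formula $\delta=\tfrac{1}{2}\big(Id.\mu\ast\tau.\delta\big)$.

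There is essentially no hard step here: the whole argument is a one-line Möbius inversion once \eqref{equ:1} is in place. The only point requiring care is the bookkeeping of the two different products, namely the pointwise product (written with a dot, as in $Id.\mu$ and $\tau.\delta$) versus the Dirichlet convolution $\ast$, and in particular remembering that the Dirichlet inverse of $Id$ is the pointwise product $Id.\mu$ rather than $\mu$ itself.
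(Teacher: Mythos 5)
Your proposal is correct and takes essentially the same route as the paper: both left-convolve the functional form of \eqref{equ:1} with $Id.\mu$, invoke $(Id.\mu)\ast Id=\epsilon$, and conclude via $\epsilon\ast\delta=\delta$. The only difference is cosmetic --- you explicitly verify the computation $\big((Id.\mu)\ast Id\big)(n)=n\sum_{d\mid n}\mu(d)=\epsilon(n)$, which the paper merely asserts.
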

	\begin{proof}
		Let $n$ a positive integer not null , and $\mu$ the Mobius function .\\
		Since $$\left( Id\ast \delta\right)(n)=\frac{1}{2}\tau(n)\delta(n)$$   
		And :
		$$\left( Id.\mu\ast Id\right)(n)=\epsilon(n)$$  
		then we have : 
		$$
		Id.\mu\ast\left(  Id\ast \delta\right) (n)=\left( Id.\mu\ast \frac{\tau\delta}{2}\right)(n)
		$$
		So :
		$$
		\delta(n)=\frac{1}{2} \big(Id.\mu*\tau.\delta\big)(n) \;\;\;\; since\;\; \left(\epsilon\ast \delta \right)(n)=\delta(n) 
		$$
		\bigskip
		where $\epsilon$ is the multiplicative identity  $\left( \epsilon(n)=\lfloor \frac{1}{n} \rfloor\right) $
	\end{proof}
	\begin{corollary}\label{cor-2-4}
		For every integer $n$ not null  we have :
		\begin{equation}
			\big(Id*Id.\delta\big)(n)=\sigma(n)\delta(n)-\big(Id^2*\delta\big)(n)
		\end{equation}
	\end{corollary}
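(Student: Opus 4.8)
The plan is to obtain this identity as a direct specialization of Corollary \ref{cor-2-2}, choosing the auxiliary function $g$ so that the two Dirichlet convolutions appearing there collapse exactly onto the ones in the statement. Since Corollary \ref{cor-2-2} already records the Leibniz-type expansion of $\delta\ast g$ for arbitrary $g$, no fresh structural input is needed; the whole task is a well-chosen substitution followed by one elementary divisor-sum evaluation.

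Concretely, I would set $g=Id^2$, i.e. $g(n)=n^2$, in
$$(\delta \ast g)(n)=\frac{\delta(n)}{n}\big(Id\ast g\big)(n)-\Big(Id*\tfrac{g.\delta}{Id}\Big)(n).$$
By commutativity of the Dirichlet convolution the left-hand side is $(Id^2\ast\delta)(n)$, the final term of the claimed identity. The subtracted convolution simplifies at once, because the inner pointwise quotient is $\tfrac{g.\delta}{Id}=\tfrac{Id^2.\delta}{Id}=Id.\delta$, so that term is precisely $(Id\ast Id.\delta)(n)$. The only genuine computation is the leading coefficient $(Id\ast Id^2)(n)$, which I would evaluate by reindexing the divisor sum through $d\mapsto n/d$:
$$(Id\ast Id^2)(n)=\sum_{d\mid n} d\Big(\tfrac{n}{d}\Big)^2=n^2\sum_{d\mid n}\tfrac{1}{d}=n\,\sigma(n),$$
using $\sum_{d\mid n}1/d=\sigma(n)/n$. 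Hence $\tfrac{\delta(n)}{n}(Id\ast Id^2)(n)=\sigma(n)\delta(n)$.

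Substituting these three evaluations into the specialized identity yields $(Id^2\ast\delta)(n)=\sigma(n)\delta(n)-(Id\ast Id.\delta)(n)$, and transposing the two convolution terms gives the statement. I do not expect any real obstacle: the argument is a one-line substitution into Corollary \ref{cor-2-2} plus the elementary identity $\sum_{d\mid n}1/d=\sigma(n)/n$. The only point demanding care is bookkeeping of notation, namely keeping the pointwise products $Id.\delta$ and $Id^2$ strictly separate from the Dirichlet products, so that the cancellation $\tfrac{Id^2.\delta}{Id}=Id.\delta$ and the evaluation $(Id\ast Id^2)(n)=n\,\sigma(n)$ are each applied in the correct register.
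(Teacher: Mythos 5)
Your proposal is correct: every step checks out, including the pointwise cancellation $\tfrac{Id^2.\delta}{Id}=Id.\delta$ and the evaluation $(Id\ast Id^2)(n)=\sum_{d\mid n}n^2/d=n\,\sigma(n)$, and the final transposition does yield the stated identity. It rests on the same key lemma as the paper (Corollary \ref{cor-2-2}), but via a different specialization, and the difference is worth noting. The paper substitutes $g=1$, obtaining
\begin{equation*}
(1\ast \delta)(n)=\frac{\sigma(n)\delta(n)-(Id^2 \ast \delta)(n)}{n},
\end{equation*}
and then ``multiplies by $Id$''; on the left-hand side this step silently uses the distributivity of the completely multiplicative function $Id$ over the Dirichlet convolution, $Id\cdot(1\ast\delta)=Id\ast Id.\delta$ (the relation \eqref{distr} mentioned in the introduction), which the paper does not make explicit. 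Your substitution $g=Id^2$ lands directly on the two convolutions in the statement, so you never need that distributivity fact; the price is the small divisor-sum computation $(Id\ast Id^2)(n)=n\,\sigma(n)$, which you carry out correctly. In short: the paper's route is one substitution plus an implicit distributivity step, yours is one substitution plus an explicit elementary evaluation --- yours is arguably the more self-contained of the two, since everything is justified on the page.
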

	
	\bigskip
	\begin{proof}
		For $g(n)=1(n)$, where $1(n)=1$ for all positive integers $n$, this reads by corollary (\ref{cor-2-2})  :
		$$
		(1\ast \delta)(n)=\frac{\sigma(n)\delta(n)-(Id^2 \ast \delta)(n)}{n}  
		$$ 
		The corollary (\ref{cor-2-4}) is satisfied by multiplying the previous equality by id .
	\end{proof}
	\begin{corollary}\label{cor-2-6}
		For every integer $n$ not null  we have :
		\begin{equation}
			\big(Id.\mu\ast\delta\big)(n)=-\big(Id\ast \mu.\delta\big)(n)
		\end{equation}
	\end{corollary}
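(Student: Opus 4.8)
The plan is to obtain this as an immediate specialization of Corollary~\ref{cor-2-1}, which already establishes that $f\ast\mu h_f=-h_f\ast\mu f$ for any L-additive $f$ with nonzero-valued $h_f$. Since the text has noted that the arithmetic derivative $\delta$ is L-additive with associated completely multiplicative function $h_\delta=Id$, and since $Id(n)=n\neq 0$ for every positive integer $n$, the hypotheses of Corollary~\ref{cor-2-1} are met with the choice $f=\delta$ and $h_f=Id$.

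First I would substitute $f=\delta$ and $h_f=Id$ into the identity of Corollary~\ref{cor-2-1}. This yields
$$
\delta\ast\mu\, Id=-Id\ast\mu\,\delta,
$$
where $\mu h_f$ becomes $\mu\, Id=Id.\mu$ and $\mu f$ becomes $\mu.\delta$. The second step is simply to invoke the commutativity of the Dirichlet convolution, so that the left-hand side $\delta\ast Id.\mu$ may be rewritten as $Id.\mu\ast\delta$, producing exactly
$$
\big(Id.\mu\ast\delta\big)(n)=-\big(Id\ast\mu.\delta\big)(n)
$$
for every positive integer $n$.

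I expect no genuine obstacle in this argument: the entire content is the specialization of the already-proven Corollary~\ref{cor-2-1} to the concrete L-additive function $\delta$, combined with the fact that $\ast$ is commutative. The only points meriting a brief check are that $h_\delta=Id$ is indeed nonzero-valued on $\mathbb{N}$ (immediate, since $Id(n)=n\ge 1$) and that the products $\mu h_\delta$ and $\mu f$ identify with the functions $Id.\mu$ and $\mu.\delta$ appearing in the statement. Thus the proof reduces to citing Corollary~\ref{cor-2-1} and applying commutativity, with no computation beyond the substitution.
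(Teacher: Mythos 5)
Your proof is correct, but it is not the route the paper's displayed proof takes. The paper proves this corollary by specializing Corollary~\ref{cor-2-2} (the $\delta$-instance of Theorem~\ref{main-the}) with $g = Id.\mu$: it expands
$(\delta \ast Id.\mu)(n)=\frac{\delta(n)}{n}\big(Id\ast Id.\mu\big)(n)-\big(Id\ast \mu.\delta\big)(n)$,
simplifies $\frac{1}{n}\big(Id\ast Id.\mu\big)(n)=\big(1\ast\mu\big)(n)=\epsilon(n)$, and kills the first term via $\delta(n)\epsilon(n)=0$ (which holds for all $n$ since $\delta(1)=0$). Your argument instead specializes Corollary~\ref{cor-2-1} with $f=\delta$, $h_f=Id$, and finishes with commutativity of $\ast$ --- a strictly shorter derivation that avoids redoing the $1\ast\mu=\epsilon$ computation, because that cancellation was already absorbed into the proof of Corollary~\ref{cor-2-1}. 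In fact the paper itself acknowledges your route in the sentence immediately following its proof (``We can prove this corollary just by substitute $f(n)=\delta(n)$ with $h_{\delta}(n)=n$ into the corollary (\ref{cor-2-1})''), so you have independently found the cleaner of the two arguments; what the paper's longer computation buys is only that it illustrates once more how Corollary~\ref{cor-2-2} generates such identities uniformly for varying $g$. Your two checkpoints --- that $h_\delta=Id$ is nonzero-valued and that $\mu h_\delta = Id.\mu$, $\mu f = \mu.\delta$ --- are exactly the right ones, and both hold trivially.
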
	
	\bigskip
	\begin{proof}
		For $g(n)=Id(n)\mu(n)$ and  for all positive integers $n$, this reads by corollary (\ref{cor-2-2})  :
		$$
		(\delta \ast Id.\mu)(n)=\frac{\delta(n)}{n}\big(Id\ast Id.\mu\big)(n)-\bigg(Id*\frac{Id.\mu.\delta}{Id}\bigg)(n)
		$$ 
		Then : 
		$$
		(\delta \ast Id.\mu)(n)=\delta(n)\big(1\ast \mu\big)(n)-\big(Id\ast \mu.\delta\big)(n)
		$$ 
		Since we know that :
		$$\big(1\ast \mu\big)(n)=\epsilon(n)$$  
		Therefore : 
		$$
		(\delta \ast Id.\mu)(n)=\delta(n)\epsilon(n)-\big(Id\ast \mu.\delta\big)(n)=-\big(Id\ast \mu.\delta\big)(n)
		$$ 
		Because $\delta(n)\epsilon(n)=0$ for all positive integers $n$ . 
	\end{proof}
	We can prove this corollary just by substitute $f(n)=\delta(n)$ with $h_{\delta}(n)=n$ into the corollary (\ref{cor-2-1}) 
	\begin{corollary}\label{cor-2-7}
		For every integer $n$ not null  we have :
		\begin{equation}
			\big(Id.\phi\ast\delta\big)(n)=n\delta(n)-\big(Id\ast \phi.\delta\big)(n)
		\end{equation}
	\end{corollary}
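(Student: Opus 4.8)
The plan is to specialize Corollary~\ref{cor-2-2} to the choice $g=Id.\phi$, exactly mirroring the proofs of Corollaries~\ref{cor-2-4} and~\ref{cor-2-6}. Setting $g(n)=Id(n)\phi(n)$ in the identity
$$
(\delta\ast g)(n)=\frac{\delta(n)}{n}\big(Id\ast g\big)(n)-\bigg(Id\ast\frac{g.\delta}{Id}\bigg)(n)
$$
the second term simplifies immediately, since $\frac{Id.\phi.\delta}{Id}=\phi.\delta$, producing the term $\big(Id\ast\phi.\delta\big)(n)$. So the only real work is to evaluate the auxiliary convolution $\big(Id\ast Id.\phi\big)(n)$ that appears in the first term.

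The key step is the divisor-sum computation
$$
\big(Id\ast Id.\phi\big)(n)=\sum_{d\mid n}d\cdot\frac{n}{d}\,\phi\Big(\frac{n}{d}\Big)=n\sum_{d\mid n}\phi\Big(\frac{n}{d}\Big)=n\sum_{d\mid n}\phi(d)=n^{2},
$$
where the factor $Id$ pulls out cleanly because $d\cdot(n/d)=n$ is independent of $d$, and the final equality uses the classical Gauss identity $\sum_{d\mid n}\phi(d)=n$, i.e.\ $1\ast\phi=Id$. Substituting this back, the first term becomes $\frac{\delta(n)}{n}\cdot n^{2}=n\delta(n)$, and by commutativity of the Dirichlet convolution ($\delta\ast Id.\phi=Id.\phi\ast\delta$) we obtain
$$
\big(Id.\phi\ast\delta\big)(n)=n\delta(n)-\big(Id\ast\phi.\delta\big)(n),
$$
which is the claimed formula.

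I expect no serious obstacle: the argument is a direct substitution followed by a single standard evaluation. The only point requiring care is recognizing that $\big(Id\ast Id.\phi\big)(n)$ reduces to $n$ times the totient summatory function, after which the Gauss identity $1\ast\phi=Id$ makes the rest purely formal. As in the remark following Corollary~\ref{cor-2-6}, one could equally derive the result from Corollary~\ref{cor-2-1} or from the completely additive identity of Theorem~\ref{ad-conv} applied to $\delta/h_\delta=\mathrm{ld}$, but the substitution route above is the most economical.
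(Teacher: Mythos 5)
Your proof is correct and follows essentially the same route as the paper: substituting $g=Id.\phi$ into Corollary~\ref{cor-2-2}, simplifying $\frac{Id.\phi.\delta}{Id}=\phi.\delta$, and reducing the first term via the Gauss identity $1\ast\phi=Id$. The only cosmetic difference is that you evaluate $\big(Id\ast Id.\phi\big)(n)=n^2$ outright, while the paper first factors it as $n\,(1\ast\phi)(n)$ before applying the same identity --- these are the same computation.
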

	
	\bigskip
	\begin{proof}
		For $g(n)=Id(n)\phi(n)$ and  for all positive integers $n$, this reads by corollary (\ref{cor-2-2})  :
		$$
		(\delta \ast Id.\phi)(n)=\frac{\delta(n)}{n}\big(Id\ast Id.\phi\big)(n)-\bigg(Id*\frac{Id.\phi.\delta}{Id}\bigg)(n)
		$$ 
		Then : 
		$$
		(\delta \ast Id.\phi)(n)=\delta(n)\big(1\ast \phi\big)(n)-\big(Id\ast \phi.\delta\big)(n)
		$$ 
		Since we know that (see\cite{cat-serie}) : $$\big(1\ast \phi\big)(n)=Id(n)=n$$  Therefore : 
		$$
		(\delta \ast Id.\phi)(n)=n\delta(n)-\big(Id\ast \phi.\delta\big)(n)
		$$ 
	\end{proof}
	
	On the other hand, An arithmetic function $f$ completely additive is L-additive function with $h_{f}(n)=1(n)=1$ for every integer not null, then we have this corollary :
	\begin{corollary} 
		Lets $f$ and $g$ be two arithmetic functions. If $f$ is completely additive, then  for every positive integer not null $n$ by using the theorem \ref{quotient} we have :  
		\begin{equation}
			(f\ast g)(n) = f(n)(1\ast g)(n)-\left( 1\ast fg\right) (n) 
		\end{equation}
	\end{corollary}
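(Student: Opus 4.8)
The plan is to observe that a completely additive function is precisely an L-additive function whose associated completely multiplicative function is the constant $h_f\equiv 1$, exactly as noted in the remark preceding the statement. Since the constant function $1$ is nowhere zero, the hypotheses of Theorem \ref{main-the} are satisfied, and the cleanest route is simply to specialize that theorem to the case $h_f=1$.

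Concretely, I would start from the identity of Theorem \ref{main-the},
$$(f\ast g)(n) = \frac{f(n)}{h_f(n)}(h_f\ast g)(n)-\left( h_f\ast \frac{fg}{h_f}\right)(n),$$
and substitute $h_f(n)=1$ for every $n$. The quotient $f(n)/h_f(n)$ collapses to $f(n)$, the convolution $h_f\ast g$ becomes $1\ast g$, and $h_f\ast(fg/h_f)$ becomes $1\ast fg$, so the claimed formula appears term by term.

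Alternatively, and this is what the phrase ``by using the theorem \ref{quotient}'' in the statement suggests, I would give a self-contained derivation mirroring the proof of Theorem \ref{main-the}. For completely additive $f$ the quotient rule of Theorem \ref{quotient} simplifies (with $h_f=1$) to $f(n/d)=f(n)-f(d)$, and expanding the convolution directly gives
$$(g\ast f)(n) = \sum_{d\mid n} g(d)\, f\!\left(\frac{n}{d}\right) = f(n)\sum_{d\mid n} g(d) - \sum_{d\mid n} f(d)g(d) = f(n)(1\ast g)(n) - (1\ast fg)(n),$$
after which commutativity of the Dirichlet convolution yields the statement.

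Since this is essentially a one-line specialization, I do not anticipate a genuine obstacle. The only point deserving care is the divisor-sum bookkeeping that identifies $\sum_{d\mid n} g(d)$ with $(1\ast g)(n)$ and $\sum_{d\mid n} f(d)g(d)$ with $(1\ast fg)(n)$, both of which follow from the symmetry $d\leftrightarrow n/d$. One could equally deduce the result from Theorem \ref{ad-conv} by taking $u=1$ and $v=g$: this gives $f\cdot(1\ast g)=(f\cdot 1)\ast g+1\ast(fg)=f\ast g+1\ast fg$, and rearranging recovers the corollary.
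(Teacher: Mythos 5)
Your proposal is correct and matches the paper's (implicit) argument: the paper derives this corollary simply by observing that a completely additive $f$ is L-additive with $h_f(n)=1$ and specializing Theorem \ref{main-the}, which is exactly your primary route. Your two alternative derivations (direct expansion via Theorem \ref{quotient}, and via Theorem \ref{ad-conv} with $u=1$, $v=g$) are also valid, but they are just the same computation that already underlies Theorem \ref{main-the}, so nothing essentially new is involved.
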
	
	\bigskip
	as we knows , the derivative arithmetic function $ld$ is completely additive with $h_{ld}(n)=1(n)$ then , for every arithmetic function $f$ we have this result :
	\begin{equation}\label{equ-20}
		(ld\ast f)(n) = ld(n)(1\ast f)(n)-\left( 1\ast ld.f\right) (n) 
	\end{equation}\label{equ-21}
	If multiplie both sides of the previous equality by $Id$ we get this formula :
	\begin{equation}
		\left(\delta\ast Id.f \right)(n)=\delta(n)\left(1\ast f \right)(n)-\left(Id\ast f\delta \right)(n)
	\end{equation}

	 In the same way we can give many result about The prime omega function $\Omega$ and the function $log$ .
	\medskip
\section{Main Results : The generalized von mangoldt function using L-additive function}
	Let $f$ L-additive function with $h_f$ is nonzero-valued, then  Now we defined the von Mangoldt function  related to The function $f$ by :
\begin{equation}\label{def-mangol}
	\Lambda_f (n)={\begin{cases}\frac{f(p)}{h_f(p)}&{\text{if }}n=p^{k}{\text{ for some prime }}p{\text{ and integer }}k\geq 1,\\0&{\text{otherwise.}}\end{cases}}
\end{equation}
then we have this result :
\begin{theorem}\label{the-3-1}
	If $n\geq1$ then we have : 
	$$
	f(n)=h_f(n)\sum_{\substack{d|n}} \Lambda_f(d)
	$$
	That mean by using dirichlet convolution : $f=h_f\ast h_f\Lambda_f$
\end{theorem}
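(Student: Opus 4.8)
The plan is to reduce the claim to the explicit product-formula of Theorem \ref{totally}, since both sides of the asserted identity are naturally expressed through the prime factorization of $n$. Writing $n = \prod_{i=1}^s p_i^{\alpha_i}$, Theorem \ref{totally} already supplies $f(n) = h_f(n)\sum_{i=1}^s \alpha_i f(p_i)/h_f(p_i)$, so it suffices to show that the divisor sum $\sum_{d\mid n}\Lambda_f(d)$ equals the same sum $\sum_{i=1}^s \alpha_i f(p_i)/h_f(p_i)$; multiplying through by $h_f(n)$ then closes the argument.

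First I would unwind the definition \eqref{def-mangol} of $\Lambda_f$. By construction $\Lambda_f(d)$ vanishes unless $d$ is a prime power, so only the divisors of $n$ of the form $d = p_i^k$ with $1 \le k \le \alpha_i$ contribute to the sum. The decisive observation is that on each such power one has $\Lambda_f(p_i^k) = f(p_i)/h_f(p_i)$ \emph{independently} of the exponent $k$. Hence, for each fixed prime $p_i$ dividing $n$, the $\alpha_i$ divisors $p_i, p_i^2, \dots, p_i^{\alpha_i}$ each contribute the same value $f(p_i)/h_f(p_i)$.

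Collecting these contributions prime by prime yields $\sum_{d\mid n}\Lambda_f(d) = \sum_{i=1}^s \alpha_i f(p_i)/h_f(p_i)$, which is exactly the sum appearing in Theorem \ref{totally}; multiplying by $h_f(n)$ gives $f(n) = h_f(n)\sum_{d\mid n}\Lambda_f(d)$, as required. For the convolution reformulation $f = h_f \ast h_f\Lambda_f$, I would invoke that $h_f$ is completely multiplicative, so $h_f(d)\,h_f(n/d) = h_f(n)$ for every $d \mid n$; pulling this common factor out of $(h_f \ast h_f\Lambda_f)(n) = \sum_{d\mid n} h_f(d)\,h_f(n/d)\,\Lambda_f(n/d)$ and reindexing the divisor sum recovers the displayed identity.

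I would expect no genuine obstacle here, since the result is essentially a repackaging of Theorem \ref{totally}. The only step requiring care is the combinatorial bookkeeping: correctly identifying which divisors contribute and recognising that $\Lambda_f$ is constant on the powers of each prime, which is precisely what produces the multiplicity $\alpha_i$ and matches the coefficient in the product-formula.
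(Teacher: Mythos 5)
Your proof is correct and takes essentially the same route as the paper: both arguments collapse the divisor sum $\sum_{d\mid n}\Lambda_f(d)$ over the prime-power divisors $p_i^k$, $1\le k\le \alpha_i$, observe that $\Lambda_f(p_i^k)=f(p_i)/h_f(p_i)$ independently of $k$, and match the resulting $\sum_{i=1}^s \alpha_i f(p_i)/h_f(p_i)$ against the explicit formula of Theorem~\ref{totally}. Your explicit verification of the convolution form $f=h_f\ast h_f\Lambda_f$ via complete multiplicativity of $h_f$ is a detail the paper merely asserts, and your bookkeeping also avoids the paper's typographical slip of writing $\sum_{k=1}^{i}$ where $\sum_{k=1}^{\alpha_i}$ is meant.
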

\begin{proof}
	If $n = p_1^{\alpha _1}\ldots  p_s^{\alpha _s}$ then we have :
	$$
	\sum_{\substack{d|n}} \Lambda_f(d)=\sum_{\substack{i=1}}^s \sum_{\substack{k=1}}^i \Lambda_f(p_i^k)=\sum_{\substack{i=1}}^s \sum_{\substack{k=1}}^i \frac{f(p_i)}{h_f(p_i)}=
	\sum_{\substack{i=1}}^s \frac{if(p_i)}{h_f(p_i)}=\frac{f(n)}{h_f(n)}
	$$
	as claimed \\
\end{proof}
\begin{theorem}
	for every positive integer $n$ we have : 
	$$
	{\displaystyle \Lambda_f (n)
		=\sum _{d\mid n}\frac{\mu \left(\frac{n}{d} \right)f\left(d \right) }{h_f\left(d\right)}}
	=-\sum_{\substack{d|n}}\frac{\mu(d)f(d)}{h_f(d)}
	$$
	That is we have $\Lambda_f=\mu\ast \frac{f}{h_f}=-1\ast\frac{\mu f}{h_f}$
\end{theorem}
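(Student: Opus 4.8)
The plan is to deduce both identities from the results already established rather than from a direct prime-power computation. Theorem~\ref{the-3-1} gives $f = h_f \ast h_f\Lambda_f$; since $h_f$ is completely multiplicative it distributes over the Dirichlet convolution, so this is the same as $f = h_f\,(1 \ast \Lambda_f)$, where $1$ denotes the constant function. Because $h_f$ is nonzero-valued, I may divide pointwise by it to obtain the cleaner identity $\frac{f}{h_f} = 1 \ast \Lambda_f$. From here the first equality is immediate by Möbius inversion: convolving both sides with $\mu$ and using $\mu \ast 1 = \epsilon$ gives $\mu \ast \frac{f}{h_f} = \mu \ast 1 \ast \Lambda_f = \epsilon \ast \Lambda_f = \Lambda_f$, which is precisely $\Lambda_f(n) = \sum_{d\mid n}\mu\!\left(\frac{n}{d}\right)\frac{f(d)}{h_f(d)}$.

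For the second equality I would invoke the completely additive structure. By Theorem~\ref{the-1-1} the function $F := f/h_f$ is completely additive, so the characterization of Theorem~\ref{ad-conv} applies. Taking $u = 1$ and $v = \mu$ there gives
$$
F(1 \ast \mu) = (F\cdot 1)\ast \mu + 1 \ast (F\mu) = F \ast \mu + 1 \ast (F\mu).
$$
Now $1 \ast \mu = \epsilon$, and since $F(1) = f(1)/h_f(1) = 0$ the pointwise product $F\epsilon$ is the zero function, so the left-hand side vanishes. Rearranging yields $\mu \ast F = -\,1 \ast (\mu F)$. Finally, substituting $F = f/h_f$ and $\mu F = \mu f/h_f$ turns this into $\Lambda_f = \mu \ast \frac{f}{h_f} = -\,1 \ast \frac{\mu f}{h_f}$, that is, $\Lambda_f(n) = -\sum_{d\mid n}\frac{\mu(d)f(d)}{h_f(d)}$, which completes the proof.

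The points requiring care are bookkeeping rather than genuine obstacles: one must confirm that the division by $h_f$ is legitimate, which is exactly the standing hypothesis that $h_f$ is nonzero-valued, and that $F\epsilon = 0$, which hinges on $F(1) = 0$; the latter is where the normalization $f(1) = 0$ for an L-additive function enters. As an alternative route that avoids Theorem~\ref{ad-conv} altogether, the identity $\mu \ast F = -\,1 \ast (\mu F)$ for a completely additive $F$ can be checked directly on prime powers using $F(p^k) = k\,F(p)$ and the vanishing of $\mu$ on non-squarefree arguments, but channeling the argument through the convolution characterization sidesteps that case analysis.
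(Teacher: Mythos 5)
Your proof is correct, and for the second identity it takes a genuinely different route from the paper's. The first equality is obtained the same way in both: Theorem~\ref{the-3-1} gives $f/h_f = 1 \ast \Lambda_f$ (you additionally justify the reduction from $f = h_f \ast h_f\Lambda_f$ by noting that the completely multiplicative $h_f$ distributes over the convolution, a detail the paper leaves implicit in the statement of Theorem~\ref{the-3-1}), and M\"obius inversion yields $\Lambda_f = \mu \ast \frac{f}{h_f}$. For the second equality the paper works by brute force: it expands $f(n/d)$ inside the sum $\sum_{d\mid n}\mu(d)\frac{f(n/d)}{h_f(n/d)}$ using the quotient rule of Theorem~\ref{quotient}, splits the resulting sum in two, and annihilates the term $\frac{f(n)}{h_f(n)}\sum_{d\mid n}\mu(d)$ via $1\ast\mu=\epsilon$ together with $f(1)=0$. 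You instead apply Schwab's characterization (Theorem~\ref{ad-conv}) to the completely additive function $F = f/h_f$ (Theorem~\ref{the-1-1}) with $u=1$, $v=\mu$, where exactly the same two facts ($1\ast\mu=\epsilon$ and $F(1)=0$) enter structurally as $F(1\ast\mu)=F\epsilon=0$. Your route is shorter, bypasses the rational-extension machinery of Section~1 entirely, and is in the spirit of the paper's own remark following Theorem~\ref{main-the} that such identities can be rederived from Theorem~\ref{ad-conv}; the paper's computation, by contrast, is self-contained and displays the cancellation explicitly without invoking the cited characterization. Both arguments use only the standing hypotheses ($f$ L-additive, $h_f$ nonzero-valued), and your bookkeeping points (legitimacy of dividing by $h_f$, and $F(1)=f(1)/h_f(1)=0$) are exactly the right ones.

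One caution about your closing aside: verifying $\mu\ast F = -1\ast(\mu F)$ \emph{only} on prime powers would not suffice, since neither side is multiplicative, so agreement on prime powers does not propagate to general $n$. One would also have to check that both sides vanish when $n$ has at least two distinct prime factors --- true (for the right-hand side it follows from $-\sum_{d\mid n}\mu(d)F(d) = -\sum_p F(p)\sum_{d\mid n,\, p\mid d}\mu(d)$ and the vanishing of the inner sum unless $n$ is a power of $p$), but it is an extra computation. The convolution argument you actually gave sidesteps this correctly, so this does not affect the validity of your proof.
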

\begin{proof}
	By Theorem (\ref{the-3-1}) applying Mobius inversion, we have $\Lambda_f=\mu\ast \frac{f}{h_f}$ , Note that :
	\begin{align*}
		\sum _{d\mid n}\frac{\mu \left(\frac{n}{d} \right)f\left(d \right) }{h_f\left(d\right)} &=
		\sum _{d\mid n} \mu(d)\frac{f\left(\frac{n}{d} \right)}{h_f\left(\frac{n}{d} \right)}
		=\sum \limits_{d|n} \mu(d)\frac{h_f(d)}{h_f(n)}
		\bigg( \frac{h_f(d)f(n)-h_f(n)f(d)}{h_f^2(d)}\bigg)
		\\
		&=\sum \limits_{d|n} \frac{\mu(d)h^2_f(d)f(n)}{h_f(n)h^2_f(d)}
		-\frac{\mu(d)h_f(n)h_f(d)f(d)}{h_f(n)h^2_f(d)}
		\\
		&=\frac{f(n)}{h_f(n)}\sum \limits_{d|n} \mu(d)
		-\sum \limits_{d|n}\frac{\mu(d)f(d)}{h_f(d)}
		\\
		& = \frac{f(n)\epsilon(n)}{h_f(n)}-\sum \limits_{d|n}\frac{\mu(d)f(d)}{h_f(d)}
		\\
		&=-\sum \limits_{d|n}\frac{\mu(d)f(d)}{h_f(d)}
	\end{align*}
	Hence $\Lambda_f (n)=-1\ast\frac{\mu f}{h_f}$. This completes the proof .
\end{proof}

\begin{corollary}
	Let $f$ an arithmetic function. If $f$ is L-additive and $h_f$ is nonzero-valued, then :
	\begin{eqnarray}
		(\tau\ast \Lambda_f)(n) =\frac{f(n)\tau(n)}{2h_f(n)} 
	\end{eqnarray}
\end{corollary}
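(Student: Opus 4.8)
The plan is to reduce the claim to a reflection identity for completely additive functions, using two facts already in hand. First, Theorem~\ref{the-3-1} states $f = h_f\ast h_f\Lambda_f$, which after dividing by $h_f$ is simply $\frac{f}{h_f} = 1\ast\Lambda_f$, i.e. $\frac{f(n)}{h_f(n)} = \sum_{d\mid n}\Lambda_f(d)$. Second, the divisor function factors as $\tau = 1\ast 1$. Combining these with associativity and commutativity of the Dirichlet convolution, I would write
$$
\tau\ast\Lambda_f = (1\ast 1)\ast\Lambda_f = 1\ast(1\ast\Lambda_f) = 1\ast\frac{f}{h_f},
$$
so that the whole problem collapses to evaluating $\left(1\ast\frac{f}{h_f}\right)(n) = \sum_{d\mid n}\frac{f(d)}{h_f(d)}$.

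The next step is to invoke Theorem~\ref{the-1-1}, by which $g := f/h_f$ is completely additive. Complete additivity gives $g(ab) = g(a)+g(b)$ for every pair $a,b$ --- crucially without any coprimality hypothesis --- so for each divisor $d$ of $n$ the complementary divisor $n/d$ satisfies the pairing relation
$$
g(d) + g\!\left(\tfrac{n}{d}\right) = g\!\left(d\cdot\tfrac{n}{d}\right) = g(n).
$$
I would then apply the standard divisor-reflection trick: since $d\mapsto n/d$ is a bijection of the set of divisors of $n$, summing the pairing relation over all $\tau(n)$ divisors yields
$$
2\sum_{d\mid n} g(d) = \sum_{d\mid n}\left(g(d)+g(n/d)\right) = \sum_{d\mid n} g(n) = \tau(n)\,g(n).
$$
Dividing by $2$ gives $\left(1\ast g\right)(n) = \tfrac12\tau(n)g(n)$, and resubstituting $g = f/h_f$ produces exactly $(\tau\ast\Lambda_f)(n) = \tfrac12\tau(n)\cdot\frac{f(n)}{h_f(n)} = \frac{f(n)\tau(n)}{2h_f(n)}$.

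I do not expect a genuine obstacle; the one subtle point --- and the place where the hypotheses are really used --- is that the pairing $g(d)+g(n/d)=g(n)$ needs complete additivity rather than mere additivity, because $d$ and $n/d$ are generally not coprime. This is precisely the content of Theorem~\ref{the-1-1}, so it is the nonvanishing of $h_f$ (which guarantees $f/h_f$ is well defined and completely additive) that makes the argument go through. As a sanity check, specializing to $f=\delta$, $h_\delta=Id$ gives $(\tau\ast\Lambda_\delta)(n)=\frac{\delta(n)\tau(n)}{2n}$, which is consistent with the formula $(Id\ast\delta)(n)=\tfrac12\tau(n)\delta(n)$ recorded in equation~\eqref{equ:1}.
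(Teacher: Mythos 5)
Your proof is correct, but it takes a genuinely different route from the paper's. The paper substitutes $g=h_f$ into Theorem~\ref{main-the}: the term $h_f\ast\frac{fh_f}{h_f}=h_f\ast f$ recombines with the left side to give the intermediate identity $(f\ast h_f)(n)=\tfrac12 f(n)\tau(n)$ (using $(h_f\ast h_f)(n)=h_f(n)\tau(n)$ by complete multiplicativity), and then rewrites $h_f\ast f$ via Theorem~\ref{the-3-1} and the distributivity $h_f\cdot(u\ast v)=(h_f u)\ast(h_f v)$ to land on $h_f(n)(\tau\ast\Lambda_f)(n)=\tfrac12\tau(n)f(n)$. You bypass Theorem~\ref{main-the} entirely: from the pointwise form of Theorem~\ref{the-3-1}, namely $\frac{f}{h_f}=1\ast\Lambda_f$, you reduce $\tau\ast\Lambda_f=(1\ast 1)\ast\Lambda_f$ to $1\ast\frac{f}{h_f}$, and evaluate this by the divisor-reflection pairing, which is legitimate precisely because $f/h_f$ is \emph{completely} additive (Theorem~\ref{the-1-1}) so that $g(d)+g(n/d)=g(n)$ holds without coprimality. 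Your route is more elementary and self-contained: it needs only Theorems~\ref{the-1-1} and~\ref{the-3-1}, avoids the rational-extension machinery (Theorem~\ref{quotient}) on which Theorem~\ref{main-the} rests, isolates exactly where the nonvanishing of $h_f$ enters, and is in effect the direct generalization of the classical pairing proof of $(Id\ast\delta)(n)=\tfrac12\tau(n)\delta(n)$ recorded in equation~\eqref{equ:1}. What the paper's route buys instead is the reusable identity $(f\ast h_f)(n)=\tfrac12 f(n)\tau(n)$ and a further illustration of its central convolution formula. One small cosmetic caution: you speak of ``dividing'' the convolution identity $f=h_f\ast h_f\Lambda_f$ by $h_f$, which is not a pointwise operation on a convolution; what justifies the step is either quoting Theorem~\ref{the-3-1} in its stated scalar form $f(n)=h_f(n)\sum_{d\mid n}\Lambda_f(d)$ (as you in fact do immediately afterwards) or invoking the distributivity of the completely multiplicative $h_f$ over the Dirichlet product.
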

\begin{proof}
	Substituting $g(n)=h_f(n)$ into the theorem (\ref{main-the}) give :
	$$
		(f\ast h_f)(n) = \frac{f(n)}{h_f(n)}(h_f\ast h_f)(n)-\left( h_f\ast \frac{fh_f}{h_f}\right) (n)
	$$ then :
	$$
	(f\ast h_f)(n) =\frac{1}{2}f(n)\tau(n)
	$$
	Since we have that from the theorem (\ref{the-3-1}) :
	$$
	f(n)=\left( h_f\ast h_f\Lambda_f\right) (n)
	$$
Then we find that :
$$
\left(h_f\ast f \right)(n)= \left(h_f\ast  h_f\ast h_f\Lambda_f \right)(n)=
h_f(n)\left(1\ast 1\ast\Lambda_f \right)(n) 
$$
We conclude that :
$$
h_f(n)\left(\tau\ast\Lambda_f \right)(n) =\frac{1}{2}\tau(n)f(n)
$$
This completes the proof of Theorem
\end{proof}

As we knows the arithmetic function $f$ completely additive is L-additive function with $h_{f}(n)=1(n)=1$ for every integer not null, then we have the von Mangoldt function  related to The function $f$ defined by :
\begin{equation}\label{equ-21}
	\Lambda_f (n)={\begin{cases}f(p)&{\text{if }}n=p^{k}{\text{ for some prime }}p{\text{ and integer }}k\geq 1,\\0&{\text{otherwise.}}\end{cases}}
\end{equation}

\begin{corollary}\label{cor-3-8}
Let $f$ an arithmetic function completely additive, then : 
	$$
	f(n)=\sum_{\substack{d|n}} \Lambda_f(d)
	$$
	That mean by using dirichlet convolution : $f=1\ast \Lambda_f$
\end{corollary}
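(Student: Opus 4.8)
The plan is to recognize Corollary~\ref{cor-3-8} as the direct specialization of Theorem~\ref{the-3-1} to the case where the completely multiplicative companion function is identically one. The crucial observation, already established earlier in the paper, is that every completely additive function $f$ is L-additive with $h_f(n)=1(n)=1$ for all positive integers $n$. Since this constant function is manifestly nonzero-valued, the hypotheses of Theorem~\ref{the-3-1} are satisfied, and I can invoke its conclusion directly.

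First I would substitute $h_f(n)=1$ into the statement of Theorem~\ref{the-3-1}. Its conclusion reads $f(n)=h_f(n)\sum_{d\mid n}\Lambda_f(d)$, and with $h_f(n)=1$ this collapses to
$$
f(n)=\sum_{d\mid n}\Lambda_f(d),
$$
which is exactly the claimed identity. Here $\Lambda_f$ is understood to be the simplified von Mangoldt function of equation~\eqref{equ-21}, where the defining ratio $f(p)/h_f(p)$ reduces to $f(p)$ precisely because $h_f(p)=1$.

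For the convolution reformulation, I would apply the same substitution to the Dirichlet-convolution form $f=h_f\ast h_f\Lambda_f$ from Theorem~\ref{the-3-1}. Setting $h_f=1$ yields $f=1\ast(1\cdot\Lambda_f)=1\ast\Lambda_f$, completing the statement. Since $1\cdot\Lambda_f=\Lambda_f$ as arithmetic functions, no further manipulation is needed.

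There is no genuine obstacle here: the result is a routine specialization rather than an independent argument. The only point warranting a moment's care is confirming that the constant function $h_f\equiv 1$ indeed meets the nonzero-valued requirement of Theorem~\ref{the-3-1}, so that the theorem may be applied without restriction; this is immediate. Thus the entire proof amounts to recording the two substitutions $h_f(n)=1$ into the two forms of Theorem~\ref{the-3-1}.
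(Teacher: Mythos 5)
Your proposal is correct and coincides with the paper's own (implicit) justification: the paper states Corollary~\ref{cor-3-8} without a separate proof, precisely because it is the specialization of Theorem~\ref{the-3-1} to $h_f(n)=1$, which is exactly the substitution you carry out. Your additional remark that the constant function $1$ is nonzero-valued, so the hypotheses of Theorem~\ref{the-3-1} hold, is a small but welcome point of care that the paper leaves unstated.
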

\begin{corollary}\label{cor-3-9}
Let $f$ an arithmetic function completely additive, then we have: 
	$$
	{\displaystyle \Lambda_f (n)
		=\sum _{d\mid n}\mu \left(\frac{n}{d} \right)f\left(d \right)
		=-\sum_{\substack{d|n}}\mu(d)f(d)}
	$$
	That is we have $\Lambda_f=\mu\ast f=-1\ast \mu f$
\end{corollary}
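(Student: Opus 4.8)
The plan is to recognize that this corollary is nothing but the specialization of the general L-additive formula to the case $h_f \equiv 1$, and to present a short self-contained derivation in the same spirit. A completely additive function $f$ is L-additive with $h_f(n) = 1(n) = 1$, and the constant function $1$ is trivially nonzero-valued, so the hypotheses of the preceding general identity $\Lambda_f = \mu \ast \frac{f}{h_f} = -1 \ast \frac{\mu f}{h_f}$ are satisfied. Substituting $h_f \equiv 1$ makes both quotients collapse, yielding $\Lambda_f = \mu \ast f = -1 \ast \mu f$ at once; one should also check that the definition \eqref{equ-21} of $\Lambda_f$ for completely additive $f$ is exactly what \eqref{def-mangol} becomes when $h_f(p) = 1$, so that the two notions of $\Lambda_f$ coincide.

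For a cleaner self-contained argument I would instead start from Corollary \ref{cor-3-8}, namely $f = 1 \ast \Lambda_f$, and apply M\"obius inversion: convolving both sides with $\mu$ and using $\mu \ast 1 = \epsilon$ gives $\mu \ast f = \epsilon \ast \Lambda_f = \Lambda_f$, which is the first claimed equality. This step requires no additivity beyond what is already encoded in Corollary \ref{cor-3-8}.

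The second equality $\mu \ast f = -1 \ast \mu f$ is where the complete additivity is genuinely used. Writing $(\mu \ast f)(n) = \sum_{d \mid n} \mu(d) f(n/d)$ and substituting $f(n/d) = f(n) - f(d)$ (valid because $f$ is completely additive) splits the sum into $f(n)\sum_{d\mid n}\mu(d) - \sum_{d\mid n}\mu(d)f(d) = f(n)\epsilon(n) - (1 \ast \mu f)(n)$. Since $\epsilon(n) = 0$ for $n > 1$ and $f(1) = 0$ handles $n = 1$, the first term vanishes identically, giving $\Lambda_f = -1 \ast \mu f$. I expect no real obstacle here; the only points demanding care are the degenerate case $n = 1$, where one must confirm that $\Lambda_f(1) = 0$ is consistent with both closed forms, and the fact that the relation $f(n/d) = f(n) - f(d)$ invokes the complete-additivity hypothesis rather than mere additivity.
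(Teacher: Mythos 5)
Your proposal is correct and matches the paper's route: the paper gives no separate proof for this corollary, treating it exactly as you do in your first paragraph, namely as the specialization $h_f\equiv 1$ of the general identity $\Lambda_f=\mu\ast\frac{f}{h_f}=-1\ast\frac{\mu f}{h_f}$ proved just before it. Your self-contained variant (M\"obius inversion of $f=1\ast\Lambda_f$ for the first equality, then splitting $f(n/d)=f(n)-f(d)$ and using $1\ast\mu=\epsilon$ together with $f(1)=0$ for the second) is simply the paper's proof of that general theorem rerun with $h_f=1$, so it introduces no genuinely different idea, though your explicit check of the $n=1$ case is a welcome bit of care the paper omits.
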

The definition (\ref{def-mangol}) may be considered a generalized von mangoldt function. This terminology arises from the observation that the logarithm is L-additive with $h_{log}(n)=1$; it satisfies the usual von mangoldt function denoted by $\Lambda$
$$
\Lambda(n)=\Lambda_{log} (n)={\begin{cases}\frac{log(p)}{h_{log}(p)}=log(p) &{\text{if }}n=p^{k}{\text{ for some prime }}p{\text{ and integer }}k\geq 1,\\0&{\text{otherwise.}}\end{cases}}
$$
By using the corollary (\ref{cor-3-8}) and (\ref{cor-3-9}) the Properties of the von mangoldt function is hold and we have 
$$
log=1\ast \Lambda \;\;\;and\;\;\; \Lambda=\mu\ast\log=-1\ast\mu\log
$$
Now we can defined the von mangoldt function associed to arithmetic derivative $ld$ by : 
$$
\Lambda_{ld} (n)={\begin{cases}\frac{1}{p} &{\text{if }}n=p^{k}{\text{ for some prime }}p{\text{ and integer }}k\geq 1,\\0&{\text{otherwise.}}\end{cases}}
$$
Substituting $h_{ld}(n)=1$ into the corollary (\ref{cor-3-8}) and (\ref{cor-3-9}) gives :
\begin{equation}\label{von-ld}
	ld(n)=\left( 1\ast \Lambda_{ld}\right)(n)
\end{equation}
And : 
\begin{equation}
	\Lambda_{ld}(n)=\left( \mu\ast ld\right) (n)=-\left( 1\ast ld\mu\right)(n) 
\end{equation}

For later convenience we introduce the prime function denoted by $F$ defined by :
$$
F(s)=\sum \limits_{p}\frac{1}{p^{s+1}-p}
$$
and note that it converges for $Re(s)>0$ It is an analog of the Riemann zeta function, described in (see, e.g., \cite{prime}), with the sum taken over prime numbers instead of all natural numbers , then we have this result about the von mangoldt function associed to arithmetic derivative 
\begin{lemma}
	Let $s$ a number complex such that $Re(s)>0$ , then we have :
$$
\sum \limits_{n\geq 1} \frac{\Lambda_{ld}(n)}{n^s}=F(s)
$$	
\end{lemma}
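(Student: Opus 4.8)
The plan is to evaluate the Dirichlet series directly by exploiting the support of $\Lambda_{ld}$. Since $\Lambda_{ld}(n)$ vanishes unless $n$ is a prime power, the only surviving terms are those with $n = p^k$ for a prime $p$ and an integer $k \geq 1$. I would therefore rewrite the single sum over $n$ as a double sum ranging over primes $p$ and exponents $k \geq 1$, substituting the value $\Lambda_{ld}(p^k) = 1/p$ from the definition:
$$
\sum_{n\geq 1}\frac{\Lambda_{ld}(n)}{n^s}=\sum_{p}\sum_{k\geq 1}\frac{\Lambda_{ld}(p^k)}{p^{ks}}=\sum_{p}\frac{1}{p}\sum_{k\geq 1}\frac{1}{p^{ks}}.
$$

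Next I would recognize the inner sum over $k$ as a geometric series in the ratio $p^{-s}$. For $\operatorname{Re}(s)>0$ we have $|p^{-s}|=p^{-\operatorname{Re}(s)}<1$, so the series converges and sums to $p^{-s}/(1-p^{-s})=1/(p^s-1)$. Combining this with the prefactor $1/p$ gives the per-prime contribution
$$
\frac{1}{p}\cdot\frac{1}{p^{s}-1}=\frac{1}{p(p^{s}-1)}=\frac{1}{p^{s+1}-p}.
$$
Summing over all primes then yields exactly $\sum_{p}\tfrac{1}{p^{s+1}-p}=F(s)$, which is the claimed identity.

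The only genuinely delicate point is the justification of convergence and of the rearrangement into a double sum; everything else is a routine geometric-series computation. For $\operatorname{Re}(s)>0$ the terms $\Lambda_{ld}(n)/n^s$ are absolutely summable, since the nonzero terms are bounded in absolute value by $p^{-1}p^{-k\operatorname{Re}(s)}\le p^{-k\operatorname{Re}(s)}$, and these sum to a convergent double series; absolute convergence then licenses both the restriction to prime-power indices and the interchange of the order of summation. The stated half-plane $\operatorname{Re}(s)>0$ is precisely the region where the geometric series converges and, as noted in the paper, where $F(s)$ itself is defined, so the equality holds throughout this domain.
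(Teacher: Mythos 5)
Your argument is the same as the paper's: restrict the sum to prime powers, insert $\Lambda_{ld}(p^k)=1/p$, sum the geometric series in $p^{-s}$ to get $\frac{1}{p}\cdot\frac{1}{p^s-1}=\frac{1}{p^{s+1}-p}$, and sum over primes. The computation is correct, and your instinct to add the convergence justification that the paper omits is good.

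However, the specific bound you give for absolute convergence is too weak and does not work. After discarding the factor $p^{-1}$, your majorant is
$$
\sum_{p}\sum_{k\geq 1}p^{-k\operatorname{Re}(s)},
$$
and for $0<\operatorname{Re}(s)\leq 1$ this diverges: already its $k=1$ part is $\sum_{p}p^{-\operatorname{Re}(s)}$, which diverges for $\operatorname{Re}(s)\leq 1$ by comparison with the harmonic series over primes. The factor $1/p$ you threw away is exactly what rescues convergence in the strip $0<\operatorname{Re}(s)\leq 1$: keeping it, the double series of absolute values is
$$
\sum_{p}\frac{1}{p}\sum_{k\geq 1}p^{-k\operatorname{Re}(s)}=\sum_{p}\frac{1}{p\left(p^{\operatorname{Re}(s)}-1\right)}\leq C\sum_{p}\frac{1}{p^{1+\operatorname{Re}(s)}}<\infty
$$
for $\operatorname{Re}(s)>0$, which legitimately licenses the restriction to prime-power indices and the interchange of summation on the whole claimed half-plane. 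With that one bound repaired, your proof is complete and in fact more careful than the paper's, which performs the identical rearrangement without any convergence discussion.
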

\begin{proof}
		Let $s$ a number complex such that $Re(s)>0$ , then 
		\begin{align*}
			\sum \limits_{n\geq 1} \frac{\Lambda_{ld}(n)}{n^s} &=\frac{\Lambda_{ld}(1)}{1^s}+\frac{\Lambda_{ld}(2)}{2^s}+\frac{\Lambda_{ld}(3)}{3^s}+\frac{\Lambda_{ld}(4)}{4^s}+\frac{\Lambda_{ld}(5)}{5^s}+\ldots +\frac{\Lambda_{ld}(16)}{16^s}+\ldots
			\\
			&=\frac{1}{2^{s+1}}+\frac{1}{3^{s+1}}+\frac{1}{2^{2s+1}}+\frac{1}{5^{s+1}}+\frac{1}{7^{s+1}}+\frac{1}{2^{3s+1}}+\ldots+\frac{1}{2^{4s+1}}+\ldots
			\\
			&=\sum \limits_{p}\sum \limits_{k\geq 1}\frac{1}{p^{ks+1}}= \sum \limits_{p}\frac{1}{p}\sum \limits_{k\geq 1}\frac{1}{p^{ks}}
			\\
			& =\sum \limits_{p}\frac{1}{p}\sum \limits_{k\geq 1}\bigg(\frac{1}{p^{s}} \bigg)^k=
			\sum \limits_{p}\frac{1}{p}.\frac{1}{p^s}.\frac{1}{1-\frac{1}{p^s}}
			\\
			&=\sum \limits_{p}\frac{1}{p^{s+1}-p}
		\end{align*}
	which completes the proof
\end{proof}

\begin{theorem}\label{the-3-3}
Let $s$ a number complex such that $Re(s)>0$ , then we have :
$$
\sum \limits_{n\geq 1} \frac{\delta(n)}{n^s}=\zeta(s-1)F(s-1)
$$	
\end{theorem}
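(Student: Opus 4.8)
The plan is to reduce the statement to the convolution identity $ld=1\ast\Lambda_{ld}$ recorded in \eqref{von-ld}, together with the evaluation of the Dirichlet series of $\Lambda_{ld}$ from the preceding lemma. The key preliminary observation is that the arithmetic derivative factors as $\delta(n)=n\,ld(n)=Id(n)\,ld(n)$, which is immediate from the definition $ld(n)=\delta(n)/n$.

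First I would absorb the factor $Id(n)=n$ into the exponent, writing
$$
\sum_{n\geq 1}\frac{\delta(n)}{n^s}=\sum_{n\geq 1}\frac{n\,ld(n)}{n^s}=\sum_{n\geq 1}\frac{ld(n)}{n^{s-1}},
$$
so that the whole problem is shifted to computing the Dirichlet series of the logarithmic derivative at the argument $s-1$. Next I would invoke the factorization $ld=1\ast\Lambda_{ld}$ and apply the coefficient-multiplication rule \eqref{eq:5} for Dirichlet series, which turns the convolution into a product:
$$
\sum_{n\geq 1}\frac{ld(n)}{n^{s-1}}=\left(\sum_{n\geq 1}\frac{1}{n^{s-1}}\right)\left(\sum_{n\geq 1}\frac{\Lambda_{ld}(n)}{n^{s-1}}\right)=\zeta(s-1)\,F(s-1),
$$
where the first factor is $\zeta(s-1)$ by definition and the second is $F(s-1)$ by the preceding lemma with $s$ replaced by $s-1$. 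Chaining the two displays yields the claimed identity.

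The step I expect to be delicate is the domain of validity rather than the algebra itself. The lemma evaluating $\sum_{n}\Lambda_{ld}(n)/n^{s-1}$ needs $\mathrm{Re}(s-1)>0$, while the factor $\zeta(s-1)$ and the absolute convergence required to justify the product rule \eqref{eq:5} in fact force $\mathrm{Re}(s)>2$; indeed, since $ld(n)$ grows no faster than a constant times $\log n$, the left-hand series $\sum_n\delta(n)/n^s$ converges absolutely only for $\mathrm{Re}(s)>2$. I would therefore first establish the equality on that half-plane, where every manipulation is legitimate, and then note that it persists on the larger region advertised in the statement by analytic continuation, each side being holomorphic there. Making this convergence bookkeeping precise — in particular reconciling the stated range $\mathrm{Re}(s)>0$ with the actual region of absolute convergence — is the only genuine subtlety.
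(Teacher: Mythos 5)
Your argument is essentially the paper's own proof: both rest on the factorization $ld=1\ast\Lambda_{ld}$ from \eqref{von-ld}, the lemma giving $\sum_{n\geq 1}\Lambda_{ld}(n)/n^{s}=F(s)$, and the product rule \eqref{eq:5}, the only cosmetic difference being that the paper writes $\delta=Id\ast Id.\Lambda_{ld}$ while you absorb the factor $n$ into the exponent first. Your closing paragraph on convergence is actually a correction the paper needed: the series identity holds literally only for $\mathrm{Re}(s)>2$ (with any extension beyond that being by continuation of the right-hand side), so the theorem's stated range $\mathrm{Re}(s)>0$ is wrong, a point the paper's proof passes over in silence.
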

\begin{proof}
	By Using $\left( \ref{von-ld}\right)$  we have : 
	$$\delta(n)=\left(Id\ast Id.\Lambda_{ld} \right)(n)$$ 
	 then by the formula $\left(\ref{eq:5} \right) $ we have  :
	$$
	\sum \limits_{n\geq 1} \frac{\delta(n)}{n^s}=\sum \limits_{n\geq 1} \frac{\left(Id\ast Id.\Lambda_{ld} \right)(n)}{n^s}=
	\zeta(s-1)\sum \limits_{n\geq 1} \frac{\Lambda_{ld}(n)}{n^{s-1}}=\zeta(s-1)F(s-1)
	$$ 
\end{proof}
\bigskip
\begin{corollary}
	for every $s\in\mathbb{C}$ where $Re(s)>2$ we have : 
	$$
	\sum \limits_{n\geq 1} \frac{\tau(n)\delta(n)}{n^s}=2\zeta^2(s-1)F(s-1) 
	$$
\end{corollary}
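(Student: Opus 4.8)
The plan is to reduce the claimed identity to the Dirichlet series of $\delta$ already established in Theorem~\ref{the-3-3}, using the convolution formula~(\ref{equ:1}) to rewrite the coefficient $\tau(n)\delta(n)$. The first step is to recall that~(\ref{equ:1}) gives $(Id\ast\delta)(n)=\frac{1}{2}\tau(n)\delta(n)$, equivalently $\tau(n)\delta(n)=2\,(Id\ast\delta)(n)$. Substituting this into the left-hand side turns the target series into
$$
\sum_{n\geq 1}\frac{\tau(n)\delta(n)}{n^s}=2\sum_{n\geq 1}\frac{(Id\ast\delta)(n)}{n^s}.
$$

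Next I would apply the multiplicativity of Dirichlet series~(\ref{eq:5}) to the convolution $Id\ast\delta$, which factors the right-hand series as the product of the Dirichlet series of $Id$ and that of $\delta$:
$$
\sum_{n\geq 1}\frac{(Id\ast\delta)(n)}{n^s}=\bigg(\sum_{n\geq 1}\frac{Id(n)}{n^s}\bigg)\bigg(\sum_{n\geq 1}\frac{\delta(n)}{n^s}\bigg).
$$
The first factor is immediate: since $Id(n)=n$, one has $\sum_{n\geq 1} n/n^s=\sum_{n\geq 1}1/n^{s-1}=\zeta(s-1)$. The second factor is exactly Theorem~\ref{the-3-3}, namely $\zeta(s-1)F(s-1)$. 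Multiplying the two and restoring the factor $2$ yields $2\zeta^2(s-1)F(s-1)$, as claimed.

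The only point requiring care—and the closest thing to an obstacle—is the region of convergence, since~(\ref{eq:5}) is an identity of absolutely convergent series valid only where both factors converge. The factor $\zeta(s-1)$ converges for $Re(s-1)>1$, i.e.\ $Re(s)>2$, whereas $F(s-1)$ already converges for $Re(s)>1$; the binding constraint is therefore $Re(s)>2$, which is precisely the stated hypothesis. This guarantees absolute convergence throughout the relevant half-plane and legitimizes the factorization, completing the argument.
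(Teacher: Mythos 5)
Your proposal is correct and follows essentially the same route as the paper: both rewrite $\tau(n)\delta(n)=2(Id\ast\delta)(n)$ via equation~(\ref{equ:1}), factor the resulting Dirichlet series through~(\ref{eq:5}) into $\zeta(s-1)$ times $\sum_{n\geq 1}\delta(n)/n^s$, and conclude with Theorem~\ref{the-3-3}. Your explicit verification of the convergence region $\mathrm{Re}(s)>2$ is in fact slightly more careful than the paper's, which asserts it without comment.
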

\begin{proof}
	If we apply the relation (\ref{eq:5}) on the equality (\ref{eq:1}) ,then for every complex number $s$ we get that :
	
	$$
	\sum \limits_{n\geq 1} \frac{\tau(n)\delta(n)}{n^s}=2\zeta(s-1)\sum \limits_{n\geq 1} \frac{\delta(n)}{n^s} \;\;\;\;\;\; \left( for\;\;Re(s)>2\right) 
	$$
	then by the theorem (\ref{dir-der}) for every $s\in\mathbb{C}$ where $Re(s)>2$ we have : 
	$$
	\sum \limits_{n\geq 1} \frac{\tau(n)\delta(n)}{n^s}=2\zeta^2(s-1)F(s-1) 
	$$
\end{proof}

\begin{corollary}
		for every $s\in\mathbb{C}$ where $Re(s)>2$ we have : 
	$$
	\sum \limits_{n\geq 1} \frac{\mu(n)\delta(n)}{n^s}
	=\frac{-F(s-1)}{\zeta(s-1)}
	$$
\end{corollary}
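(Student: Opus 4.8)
The plan is to obtain the stated identity by passing Corollary~\ref{cor-2-6} through the Dirichlet-series dictionary~\eqref{eq:5}. That corollary gives the pointwise convolution identity
$$
(Id.\mu \ast \delta)(n) = -(Id \ast \mu.\delta)(n),
$$
which is precisely an equality between two Dirichlet convolutions, so applying~\eqref{eq:5} should turn it into an equation among ordinary Dirichlet series, one factor of which is the unknown $\sum_{n\geq1}\mu(n)\delta(n)n^{-s}$.

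First I would take the Dirichlet series of the left-hand side. Since $Id.\mu \ast \delta$ is a convolution, \eqref{eq:5} factors its series as the product of $\sum_{n\geq1} n\mu(n)n^{-s} = \sum_{n\geq1}\mu(n)n^{-(s-1)} = 1/\zeta(s-1)$ and $\sum_{n\geq1}\delta(n)n^{-s}$. The second factor is evaluated by Theorem~\ref{the-3-3} as $\zeta(s-1)F(s-1)$, so the product of the two factors is simply $F(s-1)$.

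Next I would treat the right-hand side. Again by~\eqref{eq:5}, the series of $Id \ast \mu.\delta$ factors as $\left(\sum_{n\geq1} n\,n^{-s}\right)\left(\sum_{n\geq1}\mu(n)\delta(n)n^{-s}\right) = \zeta(s-1)\sum_{n\geq1}\mu(n)\delta(n)n^{-s}$. Equating the two sides and dividing by $\zeta(s-1)$ (legitimate wherever $\zeta(s-1)\neq0$, in particular for $Re(s)>2$) then yields $\sum_{n\geq1}\mu(n)\delta(n)n^{-s} = -F(s-1)/\zeta(s-1)$, as claimed.

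The hard part will be bookkeeping rather than anything deep: I must check that all the series involved converge on a common half-plane so that~\eqref{eq:5} applies term by term. Since $\delta(n)$ grows as fast as a constant times $n\log n$ (for instance along $n=2^k$, where $\delta(2^k)=\tfrac{k}{2}\,2^k$), the series $\sum\delta(n)n^{-s}$ and $\sum\mu(n)\delta(n)n^{-s}$ converge absolutely only for $Re(s)>2$, while $1/\zeta(s-1)$ requires $Re(s-1)>1$ and $F(s-1)$ requires $Re(s-1)>0$; the common region is $Re(s)>2$, matching the stated range.
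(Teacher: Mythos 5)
Your proposal is correct and is essentially the paper's own argument: the paper likewise applies the convolution-to-product relation~\eqref{eq:5} to Corollary~\ref{cor-2-6}, uses $\sum_{n\geq 1}\mu(n)n^{-s}=1/\zeta(s)$, and invokes Theorem~\ref{the-3-3} for $\sum_{n\geq 1}\delta(n)n^{-s}=\zeta(s-1)F(s-1)$ before dividing by $\zeta(s-1)$. Your added convergence bookkeeping (via $\delta(n)=O(n\log n)$, giving absolute convergence for $Re(s)>2$) is a welcome refinement the paper leaves implicit, but it does not change the route.
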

in the same way , If we apply the relation (\ref{eq:5}) to the corollary (\ref{cor-2-6}) we have that :
$$
\zeta(s-1)\sum \limits_{n\geq 1} \frac{\mu(n)\delta(n)}{n^s}
=-\sum \limits_{n\geq 1} \frac{\mu(n)}{n^{s-1}}
\sum \limits_{n\geq 1}\frac{\delta(n)}{n^{s}}\;\;\;\;\;\; \left( for\;\;Re(s)>2\right) 
$$
Since (see, e.g., \cite{cat-serie}) :
$$
\sum \limits_{n\geq 1} \frac{\mu(n)}{n^s}=\frac{1}{\zeta(s)}
$$
Therefore by using the result of theorem (\ref{the-3-3}) we have that :
$$
\sum \limits_{n\geq 1} \frac{\mu(n)\delta(n)}{n^s}
=\frac{-F(s-1)}{\zeta(s-1)}
\;\;\;\;\;\; \left( for\;\;Re(s)>2\right) 
$$
\begin{corollary}
	for every $s\in\mathbb{C}$ where $Re(s)>3$ we have
$$
\sum \limits_{n\geq 1} \frac{\phi(n)\delta(n)}{n^s}
=\frac{\zeta(s-2)}{\zeta(s-1)}\bigg( F(s-2)-F(s-1)\bigg) 
$$
\end{corollary}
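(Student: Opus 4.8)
The plan is to derive the claimed Dirichlet series from Corollary~\ref{cor-2-7}, which already packages $\phi\cdot\delta$ inside a Dirichlet convolution, and then to convert each convolution into a product of Dirichlet series via~\eqref{eq:5}. Corollary~\ref{cor-2-7} states that $(Id.\phi\ast\delta)(n)=n\delta(n)-(Id\ast\phi.\delta)(n)$; dividing by $n^{s}$ and summing over $n\geq1$ gives
$$
\Big(\sum_{n\geq1}\frac{(Id.\phi)(n)}{n^{s}}\Big)\Big(\sum_{n\geq1}\frac{\delta(n)}{n^{s}}\Big)=\sum_{n\geq1}\frac{n\delta(n)}{n^{s}}-\zeta(s-1)\sum_{n\geq1}\frac{\phi(n)\delta(n)}{n^{s}},
$$
where on the right I have used $\sum_{n\geq1}Id(n)/n^{s}=\zeta(s-1)$, so that the target series appears, multiplied by $\zeta(s-1)$, as the only unknown.

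Next I would evaluate the three remaining pieces. For the left factor, I would recognize $(Id.\phi)(n)=n\phi(n)$, so $\sum_{n\geq1}(Id.\phi)(n)/n^{s}=\sum_{n\geq1}\phi(n)/n^{s-1}$; invoking the standard identity $\sum_{n\geq1}\phi(n)/n^{s}=\zeta(s-1)/\zeta(s)$ (see, e.g.,~\cite{cat-serie}) at the argument $s-1$ yields $\zeta(s-2)/\zeta(s-1)$. Combining this with Theorem~\ref{the-3-3}, which gives $\sum_{n\geq1}\delta(n)/n^{s}=\zeta(s-1)F(s-1)$, the left-hand side collapses to $\zeta(s-2)F(s-1)$. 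For the first term on the right I would write $\sum_{n\geq1}n\delta(n)/n^{s}=\sum_{n\geq1}\delta(n)/n^{s-1}$ and apply Theorem~\ref{the-3-3} at the shifted argument $s-1$ to obtain $\zeta(s-2)F(s-2)$.

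Substituting these evaluations leaves the single linear equation $\zeta(s-2)F(s-1)=\zeta(s-2)F(s-2)-\zeta(s-1)\sum_{n\geq1}\phi(n)\delta(n)/n^{s}$, which I would solve for the target sum and divide through by $\zeta(s-1)$ to reach $\frac{\zeta(s-2)}{\zeta(s-1)}\big(F(s-2)-F(s-1)\big)$, as claimed. The algebra here is routine; the step requiring care is the bookkeeping of convergence. The hard part will be justifying that all the series involved converge absolutely on one common half-plane so that~\eqref{eq:5} may legitimately be applied: since $\phi(n)\delta(n)$ grows like $n^{2}$ up to slowly varying factors, and since the shifted series $\sum_{n\geq1}\phi(n)/n^{s-1}$ and $\sum_{n\geq1}\delta(n)/n^{s-1}$ force $Re(s)>3$, the common domain of validity is exactly $Re(s)>3$, matching the stated hypothesis.
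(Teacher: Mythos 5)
Your proposal is correct and takes essentially the same route as the paper: both start from Corollary~\ref{cor-2-7}, convert each convolution into a product of Dirichlet series via~\eqref{eq:5}, insert $\sum_{n\geq1}\phi(n)/n^{s-1}=\zeta(s-2)/\zeta(s-1)$ together with Theorem~\ref{the-3-3}, and solve the resulting linear equation for the target sum. If anything, you carry the argument slightly further than the paper, which stops at the intermediate identity and leaves the final substitution and the convergence bookkeeping for $Re(s)>3$ implicit.
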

\begin{proof}

We know from $\left(\cite{cat-serie} \right) $ that :
$$\sum \limits_{n\geq 1} \frac{\phi(n)}{n^s} =\frac{\zeta(s-1)}{\zeta(s)}$$
And by corollary (\ref{cor-2-7}) we have :
$$\big(Id.\phi\ast\delta\big)(n)=n\delta(n)-\big(Id\ast \phi.\delta\big)(n)$$
then : 
$$
\sum \limits_{n\geq 1} \frac{\phi(n)\delta(n)}{n^s}
=\frac{1}{\zeta(s-1)}
\sum \limits_{n\geq 1}\frac{\delta(n)}{n^{s-1}}-\frac{\zeta(s-2)}{\zeta^2(s-1)}\sum \limits_{n\geq 1}\frac{\delta(n)}{n^{s}}\;\;\;\;\;\; \left( for\;\;Re(s)>3\right) 
$$
\end{proof}

On the other hand as a consequence of the corollary (\ref{cor-2-4}) we have : $$\big(Id*Id.\delta\big)(n)=\sigma(n)\delta(n)-\big(Id^2*\delta\big)(n)$$ then by apply the formula \ref{eq:5} and the result of the theorem \ref{the-3-3} we have:
\begin{corollary}
		for every $s\in\mathbb{C}$ where $Re(s)>3$ we have :
	$$
	\sum \limits_{n\geq 1} \frac{\sigma(n)\delta(n)}{n^s}
	=\zeta(s-1)\zeta(s-2)\left( F(s-2)+F(s-1)\right) 
	$$
\end{corollary}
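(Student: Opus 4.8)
The plan is to start from Corollary~\ref{cor-2-4}, which reads $(Id\ast Id.\delta)(n)=\sigma(n)\delta(n)-(Id^2\ast\delta)(n)$, and to rearrange it so that the target coefficient $\sigma(n)\delta(n)$ is expressed as a sum of two Dirichlet convolutions:
$$
\sigma(n)\delta(n)=(Id\ast Id.\delta)(n)+(Id^2\ast\delta)(n).
$$
Taking the associated Dirichlet series termwise and invoking the multiplicativity relation~(\ref{eq:5}), the left-hand series splits into two products of Dirichlet series, each factoring through one of the two convolutions.

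For the first block I would evaluate $\sum_{n\geq1}Id(n)/n^s=\sum_{n\geq1}n^{1-s}=\zeta(s-1)$ and, crucially, observe that $\sum_{n\geq1}Id(n)\delta(n)/n^s=\sum_{n\geq1}\delta(n)/n^{s-1}$, which is exactly the series of Theorem~\ref{the-3-3} evaluated at $s-1$, hence equal to $\zeta(s-2)F(s-2)$. Thus the first product contributes $\zeta(s-1)\zeta(s-2)F(s-2)$. For the second block I would use $\sum_{n\geq1}Id^2(n)/n^s=\sum_{n\geq1}n^{2-s}=\zeta(s-2)$ together with Theorem~\ref{the-3-3} in its original form, $\sum_{n\geq1}\delta(n)/n^s=\zeta(s-1)F(s-1)$, so that the second product contributes $\zeta(s-2)\zeta(s-1)F(s-1)$. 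Adding the two blocks and pulling out the common factor $\zeta(s-1)\zeta(s-2)$ yields the claimed identity $\zeta(s-1)\zeta(s-2)\bigl(F(s-2)+F(s-1)\bigr)$.

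The routine part here is purely the bookkeeping of the shifts $s\mapsto s-1$ and $s\mapsto s-2$ inside $\zeta$ and $F$. The only genuine subtlety, and the place I would be most careful, is the convergence region: the appearance of $\sum_{n\geq1}\delta(n)/n^{s-1}$ forces $Re(s-1)>2$, i.e. $Re(s)>3$, since $\delta(n)$ grows on the order of $n\log n$ along prime powers. On the half-plane $Re(s)>3$ all four series above converge absolutely, which legitimizes the termwise application of~(\ref{eq:5}) and the rearrangement into products; this is exactly why the stated hypothesis is $Re(s)>3$ rather than the weaker bound appearing in Theorem~\ref{the-3-3}.
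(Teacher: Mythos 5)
Your proposal matches the paper's own argument exactly: the paper likewise rearranges Corollary~\ref{cor-2-4} into $\sigma(n)\delta(n)=(Id\ast Id.\delta)(n)+(Id^2\ast\delta)(n)$, applies the Dirichlet-series product formula~(\ref{eq:5}) to each convolution, and invokes Theorem~\ref{the-3-3} at $s-1$ and at $s$ to obtain $\zeta(s-1)\zeta(s-2)\bigl(F(s-2)+F(s-1)\bigr)$. Your added care about the convergence region $Re(s)>3$ is a welcome refinement of what the paper leaves implicit, but the route is the same.
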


	By using this equality let $f(n)=Id_k(n)$ then we have 
\begin{corollary}
	Let $k\in\mathbb{N}$ , then	for every $s\in\mathbb{C}$ where $Re(s)>k+2$ we have :
	$$
	\sum \limits_{n\geq 1} \frac{\sigma_k(n)\delta(n)}{n^s}
	=\zeta(s-1)\zeta(s-k-1)\left( F(s-1)+F(s-k-1)\right) 
	$$
\end{corollary}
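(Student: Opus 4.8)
The plan is to imitate the proof of the $\sigma$-corollary: first manufacture a pointwise convolution identity, then feed it into the Dirichlet series multiplication formula (\ref{eq:5}). Concretely, I would establish
$$
\sigma_k(n)\delta(n)=\big(Id_{k+1}\ast\delta\big)(n)+\big(Id\ast Id_k.\delta\big)(n),
$$
which is the natural generalisation of Corollary (\ref{cor-2-4}): setting $k=1$ and recalling $Id_2=Id^2$ recovers exactly $\sigma(n)\delta(n)=(Id\ast Id.\delta)(n)+(Id^2\ast\delta)(n)$.

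First I would obtain this identity from Corollary (\ref{cor-2-2}) with the completely multiplicative choice $g(n)=Id_{k+1}(n)=n^{k+1}$. The only computation needed is the evaluation of $Id\ast Id_{k+1}$; reindexing $d\mapsto n/d$ gives
$$
\big(Id\ast Id_{k+1}\big)(n)=\sum_{d\mid n} d\Big(\frac{n}{d}\Big)^{k+1}=n\sum_{d\mid n}d^{k}=n\,\sigma_k(n),
$$
while $\dfrac{Id_{k+1}.\delta}{Id}=Id_k.\delta$. Substituting these into Corollary (\ref{cor-2-2}) turns its right-hand side into $\frac{\delta(n)}{n}\,n\sigma_k(n)-\big(Id\ast Id_k.\delta\big)(n)$; since the left-hand side is $\big(Id_{k+1}\ast\delta\big)(n)$, rearranging yields the displayed identity. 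Note that the choice $g=Id_{k+1}$, rather than $Id_k$, is forced here: it is $Id\ast Id_{k+1}$, not $Id\ast Id_k$, that equals $n\,\sigma_k(n)$.

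With the convolution identity in hand, the remaining step is routine. Applying (\ref{eq:5}) term by term and using $\sum_{n\ge1}Id_m(n)/n^s=\zeta(s-m)$ together with Theorem (\ref{the-3-3}) in its shifted form $\sum_{n\ge1}\delta(n)/n^{s-k}=\zeta(s-k-1)F(s-k-1)$, I obtain
$$
\sum_{n\ge1}\frac{\big(Id_{k+1}\ast\delta\big)(n)}{n^s}=\zeta(s-k-1)\,\zeta(s-1)F(s-1)
$$
and
$$
\sum_{n\ge1}\frac{\big(Id\ast Id_k.\delta\big)(n)}{n^s}=\zeta(s-1)\,\zeta(s-k-1)F(s-k-1);
$$
adding the two and factoring out $\zeta(s-1)\zeta(s-k-1)$ gives the claimed expression. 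Unlike the $\tau$-identity (\ref{equ:1}), no factor $\tfrac12$ enters, because $\delta$ is convolved against $Id_{k+1}$ rather than against $h_\delta=Id$, so the symmetric-square phenomenon that produced $\tfrac12\tau\delta$ does not occur.

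I expect the only genuine difficulty to be bookkeeping rather than conceptual: pinning down the half-plane of absolute convergence. The binding constraint is the factor $\zeta(s-k-1)$, equivalently the shifted series $\sum_{n\ge1}\delta(n)/n^{s-k}$, which forces $Re(s)>k+2$; the remaining factors $\zeta(s-1)$, $F(s-1)$ and $F(s-k-1)$ all converge in strictly larger half-planes, so $Re(s)>k+2$ is the operative hypothesis, exactly as stated.
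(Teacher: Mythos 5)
Your proof is correct and takes essentially the same approach as the paper: both establish the pointwise identity $\sigma_k(n)\delta(n)=\big(Id_{k+1}\ast\delta\big)(n)+\big(Id\ast Id_k.\delta\big)(n)$ and then apply the multiplication formula (\ref{eq:5}) together with Theorem \ref{the-3-3} (also in the shifted form $\sum_{n\ge1}\delta(n)/n^{s-k}=\zeta(s-k-1)F(s-k-1)$). The only cosmetic difference is that you derive the convolution identity by substituting $g=Id_{k+1}$ into Corollary \ref{cor-2-2} and computing $Id\ast Id_{k+1}=Id.\sigma_k$, whereas the paper substitutes $f=Id_k$ into its identity $\left(\delta\ast Id.f\right)(n)=\delta(n)\left(1\ast f\right)(n)-\left(Id\ast f\delta\right)(n)$ obtained from the logarithmic-derivative formula; these are two routes to the same identity.
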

\begin{proof}
		Let $k\in\mathbb{N}$ ,applying equality (\ref{equ-21}) to $f(n)=Id_k(n)$  we obtain :
		$$
			\left(\delta\ast Id.Id_k \right)(n)=\delta(n)\left(1\ast Id_k \right)(n)-\left(Id\ast Id_k\delta \right)(n)
		$$
		Since :
		$$
		\left(1\ast Id_k \right)(n)=\sigma_k(n)
		$$
		then we have : 
		$$
		\left(\delta\ast Id_{k+1}\right)(n)=\delta(n)\sigma_k(n)-\left(Id\ast Id_k\delta \right)(n)
		$$
		Now applying equality (\ref{eq:5}) we obtain:
		$$
		\sum \limits_{n\geq 1} \frac{\left(\delta\ast Id_{k+1}\right)(n)}{n^s}=
		\sum \limits_{n\geq 1} \frac{\delta(n)\sigma_k(n)}{n^s}-
		\sum \limits_{n\geq 1} \frac{\left(Id\ast Id_k\delta \right)(n)}{n^s}
		$$
		Therefore,
		$$
		\sum \limits_{n\geq 1} \frac{\delta(n)\sigma_k(n)}{n^s}=
		\zeta(s-k-1)\sum \limits_{n\geq 1} \frac{\delta(n)}{n^s}+
		\zeta(s-1)\sum \limits_{n\geq 1} \frac{\delta(n)}{n^{s-k}}
		$$
		Since by the theorem (\ref{the-3-3}) we have:
		$$
		\sum \limits_{n\geq 1} \frac{\delta(n)}{n^s}=\zeta(s-1)F(s-1)
		$$
		then :
		$$
			\sum \limits_{n\geq 1} \frac{\delta(n)\sigma_k(n)}{n^s}=
			\zeta(s-k-1)\zeta(s-1)F(s-1)+\zeta(s-1)\zeta(s-k-1)F(s-k-1)
		$$
		and the proof is complete.
\end{proof}

\medskip
the aim of this study is to giving some result that may help us to find a relation defined by Dirichlet product which makes it possible to calculate the series of Dirichlet $
\sum \limits_{n\geq 1} \frac{f(n)\delta(n)}{n^s}
$ for many arithmetic function $f$.

\section{Conclusion :}
		The von Mangoldt function $\Lambda_f$ related to the L-additive function $f$ is the best way to be solved us the problem of  the Dirichlet series of the  arithmetic derivative  
			
	\end{document}